\documentclass[a4paper,10pt]{article}

\usepackage{amsthm}
\usepackage{amsmath}
\usepackage{amssymb}
\usepackage{amsfonts}
\usepackage{enumerate}
\usepackage{authblk}
\usepackage{fullpage}
\usepackage{hyperref}

\newcommand\CC{\mathbf{C}}
\newcommand\RR{\mathbf{R}}
\newcommand\QQ{\mathbf{Q}}
\newcommand\ZZ{\mathbf{Z}}
\newcommand\NN{\mathbf{N}}

\newcommand\AAA{\mathbf{A}}
\newcommand\sign{\mathrm{sign}}
\newcommand\whit{\mathcal{W}}
\newcommand\fin{\mathrm{fin}}
\newcommand\J{\mathbf{J}}
\newcommand\M{\mathbf{M}}
\newcommand\weyl{\mathrm{w}}

\newcommand\Gammatr{\Gamma(\frak{a}\rightarrow\frak{ab}^2)}
\newcommand\const{\mathrm{const.}}
\newcommand\FS{\mathrm{FS}}

\newcommand\lfact[2]{#1\backslash #2}
\newcommand\rfact[2]{#1 / #2}
\newcommand\lrfact[3]{#1\backslash #2 / #3}
\newcommand\GL{\mathrm{GL}}
\newcommand\GLone[1]{\mathrm{GL}_{#1}}
\newcommand\GLtwo[2]{\mathrm{GL}_{#1}(#2)}

\newcommand\SLone[1]{\mathrm{SL}_1}
\newcommand\SLtwo[2]{\mathrm{SL}_{#1}(#2)}
\newcommand\norm[1]{\mathcal{N}(#1)}

\theoremstyle{definition}
\newtheorem{defi}{Definition}

\theoremstyle{plain}
\newtheorem{lemm}{Lemma}
\newtheorem{theo}{Theorem}

\begin{document}

\title{A semi-adelic Kuznetsov formula over number fields}
\author{P\'eter Maga\footnote{MTA Alfr\'ed R\'enyi Institute of Mathematics, Budapest; Central European University, Budapest}}
\date{}
\maketitle

\begin{abstract}
In these notes, we prove a semi-adelic version of the Kuznetsov formula over arbitrary number fields. The extent is the set of those automorphic vectors which are not necessarily spherical in the archimedean aspect and a class of weight functions which is important for applications.
\end{abstract}

\section{Introduction and notation}

In the theory of automorphic forms, Kuznetsov's formula is among the most important and most frequently used tools. Briefly speaking, it matches a certain weighted sum of products of Fourier coefficients (or equivalently, Hecke eigenvalues) of an orthogonal basis in the cuspidal space with a sum of Kloosterman sums, weighted by a Bessel transform.

Kuznetsov \cite{Kuznetsov} originally proved his formula for the modular group over the rational field $\QQ$, which was an extension of the Petersson trace formula (which refers to holomorphic forms) to weight $0$ Maass forms.

Since then, many generalizations and reformulations were born. For totally real number fields, see the work of Bruggeman and Miatello \cite{BruggemanMiatello}, which includes the principal series representations and the discrete series representations in a single formula. For general number fields, the thesis of Venkatesh \cite{Venkatesh} gives a treatment to spherical vectors with references to \cite{BruggemanMiatello2}.

In this paper, we work out a formula to the more general case: non-spherical vectors are also included. In the derivation, we follow \cite{Venkatesh}, borrowing the archimedean investigations from \cite{BruggemanMiatello}, \cite{BruggemanMotohashi} and \cite{Lokvenec}. In the first three sections, we introduce the objects we shall need later. Then at the end of Section 3, we formulate Theorem \ref{kuznetsov}, the main result of this paper. In Sections 4-6, we define the Poincar\'e series, and compute their inner product in two ways. In Section 7, we derive Theorem \ref{kuznetsov}. A technical detail, namely, the convergence of the Poincar\'e series, is worked out in the Appendix.

As an application, we mention that the Kuznetov formula is an important ingredient in the result of \cite{BlomerHarcos}: a Burgess type subconvex bound for twisted $\GLone{2}$ $L$-functions over totally real fields. The project of the author to extend their method to any number field is in progress. We note that recently Wu \cite{Wu} proved this general Burgess type subconvexity, using a different method.

\subsection{Number fields and rings}

Let $F$ be a number field, denote by $\AAA$ its adele ring, by $\frak{o}$ the ring of integers, by $\frak{d}$ the different, $D_F$ the discriminant. Denote by $F_j$ $(1\leq j\leq r+s)$ the archimedean completions of $F$ (up to $\RR$-isomorphism), for $j\leq r$, $F_j\cong\RR$, for $j>r$, $F_j\cong\CC$. We introduce the character $\psi$ on $\lfact{F}{\AAA}$ as follows. Let $\psi:\AAA\rightarrow S^1$ be the unique continuous additive character, which is trivial on $F$; on $F_{\infty}$, it agrees with $\psi_{\infty}(x)= \exp(2\pi i(x_1+...+x_r+x_{r+1}+\overline{x_{r+1}}+...+x_{r+s}+\overline{x_{r+s}}))$; and on $F_{\frak{p}}$, it is trivial on $\frak{d}_{\frak{p}}^{-1}$ and nontrivial on $\frak{p}^{-1}\frak{d}_{\frak{p}}^{-1}$. For a fractional ideal $\frak{c}$, denote by $\norm{\frak{c}}$ its norm.

\subsection{Matrix groups}

Given a ring $R$, we define the following subgroups of $\GLtwo{2}{R}$:
\begin{equation*}
Z(R)=\left\{\begin{pmatrix} a & \ \cr \ & a\end{pmatrix}: a\in R^{\times}\right\},\ 
B(R)=\left\{\begin{pmatrix} a & b \cr \ & d\end{pmatrix}: a,d\in R^{\times}, b\in R\right\},\ 
N(R)=\left\{\begin{pmatrix} 1 & b \cr \ & 1\end{pmatrix}: b\in R\right\}.
\end{equation*}
Assume $0\neq\frak{n}_{\frak{p}},\frak{c}_{\frak{p}}\subseteq\frak{o}_{\frak{p}}$. Then let
\begin{equation*}
K_{\frak{p}}(\frak{n}_{\frak{p}},\frak{c}_{\frak{p}})=\left\{\begin{pmatrix} a & b \cr c & d\end{pmatrix}: a,d\in \frak{o}_{\frak{p}}, b\in(\frak{n}_\frak{p}\frak{d}_\frak{p})^{-1}, c\in \frak{n}_\frak{p}\frak{d}_\frak{p}\frak{c}_\frak{p}, ad-bc\in\frak{o}_\frak{p}^{\times}\right\},
\end{equation*}
moreover in the special case $\frak{n}_{\frak{p}}=\frak{o}_{\frak{p}}$, we simply write $K_{\frak{p}}(\frak{c}_{\frak{p}})$ instead of $K_{\frak{p}}(\frak{o}_{\frak{p}},\frak{c}_{\frak{p}})$. For an ideal $0\neq\frak{c}\subseteq\frak{o}$, let
\begin{equation*}
K(\frak{c})=\prod_{\frak{p}}K_{\frak{p}}(\frak{c}_{\frak{p}}),
\end{equation*}
and taking the archimedean places into account, let
\begin{equation*}
K=K(F_{\infty})\times K(\frak{o})\subseteq\GLtwo{2}{\AAA},
\end{equation*}
where
\begin{equation*}
K(F_{\infty})=(\mathrm{SO}_2(\RR))^r\times(\mathrm{SU}_2(\CC))^s.
\end{equation*}
Finally, for $0\neq\frak{n},\frak{c}\subseteq\frak{o}$, let
\begin{equation*}
\Gamma(\frak{n},\frak{c})=\left\{g_{\infty}\in\GLtwo{2}{F_{\infty}}: \exists g_{\fin}\in\prod_{\frak{p}}K_{\frak{p}}(\frak{n}_{\frak{p}},\frak{c}_{\frak{p}}) \mathrm{\ such\ that\ } g_{\infty}g_{\fin}\in\GLtwo{2}{F}\right\}.
\end{equation*}
We note that the choice of the subgroups $K$'s is not canonical (they can be conjugated arbitrarily), our normalization follows \cite{BlomerHarcos}.

\subsection{Measures}

First we remark that for the purpose of most applications, one does not need the exact normalization of our measures. However, for the sake of completeness, we give them, again mainly following \cite{BlomerHarcos}. On $F_{\infty}$, we use the Haar measure $|D_F|^{-1/2}dx_1\cdots dx_r|dx_{r+1}\wedge d\overline{x_{r+1}}|\cdots|dx_{r+s}\wedge d\overline{x_{r+s}}|$. On $F_{\frak{p}}$, we normalize the Haar measure such that $\frak{o}_{\frak{p}}$ has measure $1$. On $\AAA$ we use the Haar measure $dx$, the products of these measures, this induces a Haar probability measure on $\lfact{F}{\AAA}$ (see \cite[Chapter V, Proposition 7]{Weil}).

On $\RR^{\times}$, we use the measure $d_{\RR}^{\times}y=dy/|y|$, this gives rise to a measure on $\CC^{\times}$ as $d_{\CC}^{\times}y=d_{\RR}^{\times}|y|d\theta/2\pi$, where $\exp(i\theta)=y/|y|$. On $F_{\infty}^{\times}$, we use the product $d_{\infty}^{\times}y$ of these measures. On $F_{\frak{p}}^{\times}$ we normalize the Haar measure such that $\frak{o}_{\frak{p}}^{\times}$ has measure $1$. The product $d^{\times}y$ of these measures is a Haar measure on $\AAA^{\times}$, inducing some Haar measure on $\lfact{F^{\times}}{\AAA^{\times}}$.

On $K$ and its factors, we use the Haar probability measures. On $\lfact{Z(F_{\infty})}{\GLtwo{2}{F_{\infty}}}$, we use the Haar measure which satisfies
\begin{equation*}
\int_{\lfact{Z(F_{\infty})}{\GLtwo{2}{F_{\infty}}}}f(g)dg= \int_{(\RR^{\times})^r\times(\RR_+^{\times})^s} \int_{F_{\infty}} \int_{K(F_{\infty})}f\left(\begin{pmatrix}y & x \cr \ & 1 \end{pmatrix}k\right)dkdx\frac{d_{\infty}^{\times}y}{|y|},
\end{equation*}
where $|y|=\prod_{j=1}^r|y_j|\prod_{j=r+1}^{r+s}|y_j|^2$.

On $\GLtwo{2}{F_{\frak{p}}}$ we normalize the Haar measure such that $K(\frak{o}_{\frak{p}})$ has measure $1$. On $\lfact{Z(F_{\infty})}{\GLtwo{2}{\AAA}}$ we use the product of these measures.

As we noted above, the exact normalization is not really relevant. Because of this and for simplicity, in many calculations below, we will write $\const$ to denote a constant which is absolute in the sense that it depends only on the given number field $F$ and the normalization of measures.

\section{Special functions at archimedean places}

\subsection{Real places}

In the case $F_j\cong\RR$, we introduce the following functions. Assume
\begin{equation*}
\nu\in i\RR\cup\left(\ZZ+\frac{1}{2}\right) \cup\left(-\frac{1}{2},\frac{1}{2}\right), \quad \varepsilon\in\{0,1\}
\end{equation*}
are given numbers, $(\nu,\epsilon)$ later will be referred as the spectral parameter. Assume moreover, that $q\in 2\ZZ$ is a given even integer, later referred as the weight.

\begin{defi}\label{weightofrealvectors}
Assume $\phi:\GLtwo{2}{\RR}\rightarrow\CC$. We say it is of weight $q$, if
\begin{equation*}
\phi\left(g \begin{pmatrix} \cos\theta & \sin\theta \cr -\sin\theta & \cos\theta \end{pmatrix} \right)=e^{iq\theta}\phi(g)
\end{equation*}
for all $\theta\in\RR$.
\end{defi}

\begin{defi} Define the Whittaker function $\whit_{q,\nu}$ on $\RR^{\times}$ as
\begin{equation*}
\whit_{q,\nu}(y)=\frac{i^{\sign(y)\frac{q}{2}} W_{\sign(y)\frac{q}{2},\nu}(4\pi|y|)}{ (\Gamma(\frac{1}{2}-\nu+\sign(y)\frac{q}{2}) \Gamma(\frac{1}{2}+\nu+\sign(y)\frac{q}{2}))^{\frac{1}{2}}},
\end{equation*}
$W$ denoting the Whittaker function. We note that either $\Gamma(\frac{1}{2}-\nu+\sign(y)\frac{q}{2}) \Gamma(\frac{1}{2}+\nu+\sign(y)\frac{q}{2})\geq 0$ or it shows a pole.
\end{defi}
Compare this with \cite[pp.11-12]{BlomerHarcos}. We also record \cite[(25)]{BlomerHarcos}:
\begin{equation}\label{normofrealwhittaker}
\int_{\RR^{\times}}|\whit_{q,\nu}(y)|^2d_{\RR}^{\times}y=1.
\end{equation}

\subsection{Complex places}

In the case $F_j\cong\CC$, the available literature is much smaller, so we quote the details up to some extent in this and the following few subsections. We mainly follow the works of Bruggeman, Motohashi and Lokvenec-Guleska \cite{BruggemanMotohashi}, \cite{BruggemanMotohashi13}, \cite{Lokvenec}.

On the group $\mathrm{SU}_2(\CC)$, we normalize the Haar measure such that $\mathrm{SU}_2(\CC)$ has measure $1$. We need those irreducible representations of $\mathrm{SU}_2(\CC)$ that are trivial on the center (representations of $\mathrm{PSU}_2(\CC)$). These are parametrized by the nonnegative integers as follows. For all integer $l\geq 0$, there is a unique $2l+1$ dimensional irreducible representation and we now give the matrix coefficients $\Phi_{p,q}^l$ for $|p|,|q|\leq l$, where $p,q$ are integers. First observe
\begin{equation*}
\mathrm{SU}_2(\CC)=\left\{k[\alpha,\beta]= \begin{pmatrix}\alpha & \beta \cr -\overline{\beta} & \overline{\alpha} \end{pmatrix} : |\alpha|^2+|\beta|^2=1 \right\}.
\end{equation*}
Then the matrix coefficients are defined via the equation
\begin{equation}\label{su2matrixcoefficients}
\sum_{|p|\leq l}\Phi_{p,q}^l(k[\alpha,\beta])z^{l-p}= (\alpha z-\overline{\beta})^{l-q}(\beta z+\overline{\alpha})^{l+q}.
\end{equation}

Again, assume the spectral parameter $(\nu,p)$ is given such that
\begin{equation*}
(\nu\in i\RR, \quad p\in\ZZ) \quad \mathrm{OR} \quad \left(\nu\in\left(-\frac{1}{2},\frac{1}{2}\right)\setminus\{0\},\quad p=0\right),
\end{equation*}
and the weight $(l,q)$ is also given such that $l\geq |p|$ and $|q|\leq l$.

Recall the Iwasawa decomposition: any element $g\in\SLtwo{2}{\CC}$ can be uniquely written in the form $g=n(x)a(y)k[\alpha,\beta]$, where $k$ is defined above,
\begin{equation*}
n(x)= \begin{pmatrix} 1 & x \cr \ & 1\end{pmatrix}, \quad a(y)=\begin{pmatrix} \sqrt{y} & \ \cr \ & 1/\sqrt{y}\end{pmatrix},
\end{equation*}
$x\in\CC$, $y>0$ real.

First let
\begin{equation*}
\varphi(n(x)a(y)k[\alpha,\beta])=y^{1+\nu}\Phi_{p,q}^l(k[\alpha,\beta]).
\end{equation*}
When it is needed, we indicate the dependence on the weight and spectral data and write $\varphi_{l,q}(\nu,p)$.

\subsection{Jacquet integral}

For $\omega\in\CC$, and $f\in C^{\infty}(G)$ satisfying the growth condition
\begin{equation*}
f(n(x)a(y)k)=O(y^{1+\sigma})
\end{equation*}
with some $\sigma>0$, define the Jacquet integral
\begin{equation*}
\J_{\omega}f(g)=\int_{\CC} e^{-2\pi i(\omega x+\overline{\omega x})}f(\weyl n(x)g)dx
\end{equation*}
where $dx=d\Re xd\Im x$, $\weyl=k[0,1]$ stands for the Weyl element. For $0\neq\omega\in\CC$ and $f=\varphi$, we drop $\varphi$ from the notation and simply write $\J_{\omega}$ in place of $\J_{\omega}\varphi$. This can be computed (see \cite[Section 5]{BruggemanMotohashi} and \cite[Section 4.1]{Lokvenec}) to be
\begin{equation}\label{jacquettransform1}
\begin{split}
\J_{\omega}(n(x)a(y)k[\alpha,\beta])&=(-1)^{l-p} (2\pi)^{\nu} |\omega|^{\nu-1} e^{2\pi i(\omega x+\overline{\omega x})}\\ &\cdot\sum_{|m|\leq l} \left(\frac{i\omega}{|\omega|}\right)^{-p-m} w^l_m(\nu,p;|\omega|y)\Phi_{m,q}^l(k[\alpha,\beta]),
\end{split}
\end{equation}
where
\begin{equation}\label{jacquettransform2}
w^l_m(\nu,p;y)=\sum_{j=0}^{l-\frac{1}{2}(|m+p|+|m-p|)}(-1)^j \xi^l_p(m,j)\frac{(2\pi y)^{l+1-j}}{\Gamma(l+1+\nu-j)}K_{\nu+l-|m+p|-j}(4\pi y),
\end{equation}
$K$ denoting the $K$-Bessel function, and
\begin{equation}\label{jacquettransform3}
\xi^l_p(m,j)=\frac{j!(2l-j)!}{(l-p)!(l+p)!} {{l-\frac{1}{2}(|m+p|+|m-p|)}\choose{j}} {{l-\frac{1}{2}(|m+p|-|m-p|)}\choose{j}}.
\end{equation}
Note that a priori we need $\Re\nu>0$, but we can remove this condition by analytic continuation.

\subsection{Goodman-Wallach operator}

Another operator we need, is the Goodman-Wallach operator, which we specialize again to $\varphi$ and obtain (see \cite[Section 6]{BruggemanMotohashi} and \cite[Section 4.2]{Lokvenec})
\begin{equation*}
\begin{split}
\M_{\omega}(n(x)a(y)k[\alpha,\beta])&= (2\pi|\omega|)^{-\nu-1} e^{2\pi i(\omega x+\overline{\omega x})}\\ &\cdot\sum_{|m|\leq l} \left(\frac{-i\omega}{|\omega|}\right)^{p-m} \mu^l_m(\nu,p;|\omega|y)\Phi_{m,q}^l(k[\alpha,\beta]),
\end{split}
\end{equation*}
where
\begin{equation*}
\mu^l_m(\nu,p;y)=\sum_{j=0}^{l-\frac{1}{2}(|m+p|+|m-p|)} \xi^l_p(m,j)\frac{(2\pi y)^{l+1-j}}{\Gamma(l+1+\nu-j)}I_{\nu+l-|m+p|-j}(4\pi y),
\end{equation*}
$I$ denoting the $I$-Bessel function. Again, occasionally, we may write $\M_{\omega}\varphi$ or even $\M_{\omega}\varphi_{l,q}(\nu,p)$, when there is any danger of confusion.

We cite \cite[Lemma 6.2]{BruggemanMotohashi}, \cite[Lemma 4.2.2]{Lokvenec} for the relations matching these operators. Here, we have to indicate the dependence on $\nu,p$.
\begin{lemm}\label{jacquettransformofgoodmanwallach}
Let $\omega_2\neq 0$, $\Re\nu>0$. Then
\begin{equation*}
\J_0\M_{\omega_2}=\frac{\sin\pi(\nu-p)}{\nu^2-p^2} \frac{\Gamma(l+1-\nu)}{\Gamma(l+1+\nu)}\varphi(-\nu,-p).
\end{equation*}
and for $\omega_1\neq 0$,
\begin{equation*}
\J_{\omega_1}\M_{\omega_2} =\mathcal{J}^*_{\nu,p}(4\pi\sqrt{\omega_1\omega_2})\J_{\omega_1},
\end{equation*}
with
\begin{equation*}
\mathcal{J}^*_{\nu,p}(z)=J^*_{\nu-p}(z)J^*_{\nu+p}(\overline{z}),
\end{equation*}
where $J^*_{\nu}$ is the even entire function of $z$ which is equal to $J_{\nu}(z)(z/2)^{-\nu}$ for $z>0$ and $J$ stands for the $J$-Bessel function. \qed
\end{lemm}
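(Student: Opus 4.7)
\emph{Strategy.} The plan is to exploit the $N$-equivariance on the left and the $\mathrm{SU}_2(\CC)$-equivariance on the right of both sides to reduce each identity to a claim at $g = a(y)$, and then to compute the resulting $\CC$-integral via the Iwasawa decomposition of $\weyl n(x) a(y)$ together with classical Bessel-function identities.

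\emph{Step 1 (reduction to $a(y)$).} Both sides of each identity have the same left $N$-covariance: for $\omega_1 \neq 0$ they transform by the character $n(x_0) \mapsto e^{2\pi i(\omega_1 x_0 + \overline{\omega_1 x_0})}$, while for $\omega_1 = 0$ they are $N$-invariant. On the right, both sides live in the $\mathrm{SU}_2(\CC)$-isotypic component determined by the $K$-type $(l, q)$ of $\varphi$, and the scalars $\mathcal{J}^*_{\nu, p}(\cdot)$ and $\frac{\sin\pi(\nu-p)}{\nu^2-p^2}\frac{\Gamma(l+1-\nu)}{\Gamma(l+1+\nu)}$ are $K$-invariant. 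It therefore suffices to verify equality at $g = a(y)$ and to match the coefficient of each $\Phi^l_{m, q}$.

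\emph{Step 2 (Iwasawa and integral).} The Iwasawa decomposition
\[
\weyl\, n(x)\, a(y) = n(x')\, a(y')\, k[\alpha(x, y), \beta(x, y)], \qquad x' = \tfrac{-\bar x}{|x|^2 + y^2}, \qquad y' = \tfrac{y}{|x|^2 + y^2},
\]
together with the explicit formula for $\M_{\omega_2}$, turns $\J_{\omega_1}\M_{\omega_2}(a(y))$ into a sum over $|m| \leq l$ of integrals
\[
\int_{\CC} e^{-2\pi i(\omega_1 x + \overline{\omega_1 x}) + 2\pi i(\omega_2 x' + \overline{\omega_2 x'})}\, \mu^l_m(\nu, p;\, |\omega_2|\, y')\, \Phi^l_{m, q}(k[\alpha, \beta])\, dx.
\]
Expanding $\Phi^l_{m, q}(k[\alpha, \beta])$ via \eqref{su2matrixcoefficients} and passing to polar coordinates $x = r e^{i\theta}$, the angular integration yields $J$-Bessel functions through $\int_0^{2\pi} e^{iz\cos\theta + in\theta}\, d\theta = 2\pi\, i^n\, J_n(z)$, while the radial integration, once \eqref{jacquettransform2}--\eqref{jacquettransform3} is substituted for $\mu^l_m$, reduces to classical integrals of products $I_\alpha J_\beta$ (for $\omega_1 \neq 0$) or to Mellin transforms of $I_\alpha$ (for $\omega_1 = 0$).

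\emph{Step 3 (matching; main obstacle).} When $\omega_1 \neq 0$, the double sum over $j$ and $m$ should collapse to yield exactly the product $J^*_{\nu-p}(4\pi\sqrt{\omega_1\omega_2})\, J^*_{\nu+p}(\overline{4\pi\sqrt{\omega_1\omega_2}})$ times the expansion \eqref{jacquettransform1} of $\J_{\omega_1}(a(y))$. When $\omega_1 = 0$, the Mellin transform of $I_\alpha$ should produce the scalar $\frac{\sin\pi(\nu-p)}{\nu^2-p^2}\frac{\Gamma(l+1-\nu)}{\Gamma(l+1+\nu)}$ times $\varphi(-\nu, -p)(a(y))$. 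The principal obstacle is the bookkeeping in this matching step: the combinatorial coefficients $\xi^l_p(m, j)$ from the finite sum defining $\mu^l_m$ must be shown to recombine into the clean factorization $J^*_{\nu-p}\, J^*_{\nu+p}$, which is the heart of \cite[Lemma 6.2]{BruggemanMotohashi} and \cite[Lemma 4.2.2]{Lokvenec}. The hypothesis $\Re\nu > 0$ secures absolute convergence in the Jacquet integral; the identities then extend to all admissible $\nu$ by analytic continuation.
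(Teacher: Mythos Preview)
The paper does not prove this lemma at all: the sentence preceding it reads ``We cite \cite[Lemma 6.2]{BruggemanMotohashi}, \cite[Lemma 4.2.2]{Lokvenec} for the relations matching these operators,'' and the statement ends with a bare \qed. So there is no proof in the paper to compare against; the lemma is imported wholesale from the references.

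Your proposal is a reasonable outline of what those references actually do: the reduction in Step~1 via $N$- and $K$-equivariance is correct, and the integral set-up in Step~2 using the Iwasawa decomposition of $\weyl n(x)a(y)$ is the standard route. However, your Step~3 is not a proof but an acknowledgment that the combinatorial matching of the $\xi^l_p(m,j)$ coefficients is ``the heart of \cite[Lemma 6.2]{BruggemanMotohashi} and \cite[Lemma 4.2.2]{Lokvenec}.'' In other words, you too ultimately defer the crux to the same sources the paper cites. If the intent is to \emph{replace} the citation by a self-contained argument, you would need to carry out that recombination explicitly (it is several pages in the cited works); if the intent is merely to indicate why the lemma is plausible, your sketch does that, but then it is not materially more than what the paper already does by citing.
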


\subsection{The Whittaker function}

We would like to define the Whittaker function analogously to the real case, following again \cite[Sections 3-5]{BruggemanMotohashi}, \cite[Chapters 2-4]{Lokvenec}. The real Lie algebra $\frak{sl}_2(\CC)$ of $\SLtwo{2}{\CC}$ is generated by the elements
\begin{equation*}
\mathbf{H}_1=\frac{1}{2}\begin{pmatrix}1 & \ \\ \ & -1\end{pmatrix},
\mathbf{V}_1=\frac{1}{2}\begin{pmatrix}\ & 1 \\ 1 & \ \end{pmatrix},
\mathbf{W}_1=\frac{1}{2}\begin{pmatrix}\ & 1 \\ -1 & \ \end{pmatrix},
\end{equation*}
\begin{equation*}
\mathbf{H}_2=\frac{1}{2}\begin{pmatrix}i & \ \\ \ & -i\end{pmatrix},
\mathbf{V}_2=\frac{1}{2}\begin{pmatrix}\ & i \\ -i & \ \end{pmatrix},
\mathbf{W}_2=\frac{1}{2}\begin{pmatrix}\ & i \\ i & \ \end{pmatrix}.
\end{equation*}
Let $\frak{g}=\frak{sl}_2(\CC)\otimes_{\RR}\CC$ be the complexification of $\mathrm{sl}_2(\CC)$. The center of the universal enveloping algebra $U(\frak{g})$ is generated by two Casimir elements:
\begin{equation*}
\Omega_{\pm}=\frac{1}{8}\left(\left(\mathbf{H}_1\mp \mathbf{H}_2\right)^2+\left(\mathbf{V}_1\mp \mathbf{W}_2\right)^2+\left(\mathbf{W}_1\mp \mathbf{V}_2\right)^2\right).
\end{equation*}
By a Whittaker function on $\SLtwo{2}{\CC}$, we mean a function which is an eigenfunction of $\Omega_{\pm}$, has exponential decay as $y\rightarrow\infty$ and on which the unipotent subgroup $\{n(x):x\in\CC\}$ acts through a character of $\CC$.

For any fixed $0\neq\omega\in\CC^{\times}$, the function $\J_{\omega}$ satisfies these conditions. However, we shall fix one of them and suitably normalize it. The following Lemma is analogous to \cite[Theorem 2]{BruggemanMotohashi13}, the difference arises from the normalization and the fact that we work it out for the complementary series as well (i.e. we do not require $\Re\nu=0$).

\begin{lemm}\label{normofjacquettransform}
\begin{equation*}
\int_0^{\infty}|\J_1(a(y))|^2\frac{dy}{y}= \frac{(2\pi)^{2\Re\nu}}{8(2l+1)}{{2l}\choose{l-q}}^{-1}{{2l}\choose{l-p}} \left|\frac{\Gamma(l+1-\nu)}{\Gamma(l+1+\nu)}\right|.
\end{equation*}
\end{lemm}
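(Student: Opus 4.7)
The plan is to reduce this to a one-dimensional integral that can be evaluated in closed form. First I specialize the explicit formula (\ref{jacquettransform1}) at $\omega=1$, $x=0$, and $k[\alpha,\beta]=I$ (i.e.\ $\alpha=1$, $\beta=0$). Setting $\alpha=1,\beta=0$ in (\ref{su2matrixcoefficients}) gives $\sum_{|m|\leq l}\Phi^l_{m,q}(I)\,z^{l-m}=z^{l-q}$, hence $\Phi^l_{m,q}(I)=\delta_{m,q}$, so the sum over $m$ collapses to a single term and
\begin{equation*}
\J_1(a(y))=(-1)^{l-p}(2\pi)^{\nu}i^{-p-q}\,w^l_q(\nu,p;y).
\end{equation*}
Since $|(2\pi)^{\nu}|^2=(2\pi)^{2\Re\nu}$ whether $\nu\in i\RR$ or $\nu\in(-\tfrac12,\tfrac12)$, the statement reduces to proving
\begin{equation*}
\int_0^{\infty}|w^l_q(\nu,p;y)|^2\,\frac{dy}{y}=\frac{1}{8(2l+1)}{{2l}\choose{l-q}}^{-1}{{2l}\choose{l-p}}\left|\frac{\Gamma(l+1-\nu)}{\Gamma(l+1+\nu)}\right|.
\end{equation*}

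Next I expand $|w^l_q(\nu,p;y)|^2$ via the finite sum (\ref{jacquettransform2}) and integrate the resulting double sum term by term. Each cross term is of Weber--Schafheitlin type,
\begin{equation*}
\int_0^{\infty} y^{2l+1-j-j'}\,K_{\nu+l-|q+p|-j}(4\pi y)\,\overline{K_{\nu+l-|q+p|-j'}(4\pi y)}\,\frac{dy}{y},
\end{equation*}
and is evaluated by the classical formula
\begin{equation*}
\int_0^{\infty} t^{s-1}K_{\mu}(t)K_{\mu'}(t)\,dt=\frac{2^{s-3}}{\Gamma(s)}\Gamma\!\left(\frac{s+\mu+\mu'}{2}\right)\Gamma\!\left(\frac{s+\mu-\mu'}{2}\right)\Gamma\!\left(\frac{s-\mu+\mu'}{2}\right)\Gamma\!\left(\frac{s-\mu-\mu'}{2}\right).
\end{equation*}
Equivalently, inserting the Mellin--Barnes representation for each $K_{\mu}$ expresses the Mellin transform of $w^l_q(\nu,p;\cdot)$ as a finite sum of Gamma quotients, after which Mellin--Plancherel reduces the problem to a finite combinatorial identity for products of Gamma functions.

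The main obstacle is the collapse of this double sum. The coefficients $\xi^l_p(q,j)$ in (\ref{jacquettransform3}) have a manifest binomial/hypergeometric shape, and the inner summation should be recognized as a terminating hypergeometric series evaluable in closed form by Chu--Vandermonde or Saalsch\"utz's identity. The three factors on the right then acquire representation-theoretic meaning: $(2l+1)^{-1}$ is the dimension reciprocal of the irreducible $\mathrm{SU}_2(\CC)$-representation, the ratio of binomials reflects the Peter--Weyl normalization of the matrix coefficients $\Phi^l_{p,q}$, and $|\Gamma(l+1-\nu)/\Gamma(l+1+\nu)|$ is the Plancherel density of the principal/complementary series. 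A robust fallback is to first establish the identity for $\nu\in i\RR$, directly adapting \cite[Theorem~2]{BruggemanMotohashi13} to our normalization, and then extend to $\nu\in(-\tfrac12,\tfrac12)\setminus\{0\}$ by analytic continuation in $\nu$, both sides being analytic on the admissible strip.
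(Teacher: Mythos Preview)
Your initial reduction is correct and coincides with the paper's first step: at $k=I$ only the term $m=q$ survives, so one is left with $\int_0^\infty |w^l_q(\nu,p;y)|^2\,dy/y$. From there, however, your plan diverges from the paper's and leaves the essential step undone. You propose to expand the square of the full $j$-sum in (\ref{jacquettransform2}) and collapse the resulting double sum of Weber--Schafheitlin integrals via a hypergeometric identity; you call this ``the main obstacle'' but do not carry it out, so as written this is a gap rather than a proof. Your fallback of establishing the identity for $\nu\in i\RR$ and then extending to real $\nu$ by analytic continuation is also problematic: the integrand involves $|K_\mu|^2=K_\mu K_{\overline{\mu}}$, and since $\overline{\nu}=-\nu$ on $i\RR$ but $\overline{\nu}=\nu$ on $(-\tfrac12,\tfrac12)$, the two regimes are not analytic continuations of one another in a single complex variable.

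The paper sidesteps the combinatorial obstacle entirely. The key observation is that for $q=l$ the upper limit $l-\tfrac12(|l+p|+|l-p|)$ in (\ref{jacquettransform2}) equals $0$, so the $j$-sum collapses to the single term $j=0$ and $w^l_l(\nu,p;y)$ is a constant multiple of $(2\pi y)^{l+1}K_{\nu-p}(4\pi y)$. The integral is then a single Weber--Schafheitlin evaluation (\cite[6.576(4)]{GR}), which the paper simplifies separately in the principal and complementary cases. The passage from $q=l$ to general $q$ is not done by any hypergeometric collapse but by quoting the identity \cite[p.~89]{BruggemanMotohashi13},
\[
\int_0^\infty |v^l_q(y)|^2\,\frac{dy}{y}={{2l}\choose{l-q}}^{-1}\int_0^\infty |v^l_l(y)|^2\,\frac{dy}{y},
\]
which immediately supplies the ${2l\choose l-q}^{-1}$ factor. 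This two-step reduction (specialize $q=l$, then invoke the $q$-independence identity) is both shorter and avoids the double-sum manipulation you flagged as the hard part.
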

\begin{proof}
First observe that by (\ref{su2matrixcoefficients}), $\Phi_{m,q}^l(k[1,0])=0$, if $m\neq q$, and $\Phi_{q,q}^l(k[1,0])=1$. Hence in \cite[(4.8)]{Lokvenec}, we may write
\begin{equation*}
\J_1(a(y))=v^l_q(y,1),
\end{equation*}
where
\begin{equation*}
v^l_q(y,1)=y^{1-\nu}\int_{\CC} \frac{e^{-2\pi iy(z+\overline{z})}}{(1+|z|^2)^{1+\nu}} \Phi_{p,q}^l \left(k\left[\frac{\overline{z}}{\sqrt{1+|z|^2}}, \frac{-1}{\sqrt{1+|z|^2}}\right]\right)dz.
\end{equation*}
So what is left is to compute $\int_0^{\infty}|v^l_q(y,1)|^2dy/y$.

First let $q=l$. Then by (\ref{jacquettransform1}), (\ref{jacquettransform2}), (\ref{jacquettransform3}), this integral is
\begin{equation}\label{belso1}
 (2\pi)^{\nu+\overline{\nu}}\frac{1}{\Gamma(l+1+\nu)\Gamma(l+1+\overline{\nu})} \left(\frac{(2l)!}{(l-p)!(l+p)!}\right)^2\int_0^{\infty}(2\pi y)^{2l+2} |K_{\nu-p}(4\pi y)|^2\frac{dy}{y}.
\end{equation}
To compute the inner integral, we use \cite[6.576(4)]{GR}:
\begin{equation*}
\begin{split}
&\int_0^{\infty}r^{2l+1}|K_{\nu}(r)|^2dr=\\ &\frac{2^{2l-1}}{(2l+1)!} \Gamma\left(l+1+\frac{\nu}{2}-\frac{\overline{\nu}}{2}\right) \Gamma\left(l+1-\frac{\nu}{2}+\frac{\overline{\nu}}{2}\right)
\Gamma\left(l+1+\frac{\nu}{2}+\frac{\overline{\nu}}{2}\right)
\Gamma\left(l+1-\frac{\nu}{2}-\frac{\overline{\nu}}{2}\right),
\end{split} 
\end{equation*}
the conditions are all satisfied by noting $|\Re\nu|<1/2$.

First assume we are in the principal series ($\Re\nu=0$). Then (\ref{belso1}) equals
\begin{equation*}
\begin{split}
\frac{1}{2^{2l+2}} \frac{1}{\Gamma(l+1+\nu)\Gamma(l+1-\nu)} \left(\frac{(2l)!}{(l-p)!(l+p)!}\right)^2 \frac{2^{2l-1}}{(2l+1)!} \Gamma(l+1+\nu) &\Gamma(l+1-\nu)(l+p)!(l-p)!=\\ &\frac{1}{8(2l+1)}{{2l}\choose{l-p}}.
\end{split}
\end{equation*}
Now assume we are in the complementary series ($\Im\nu=0$, $\nu\neq 0$). Then $p=0$ and for (\ref{belso1}), we obtain
\begin{equation*}
\begin{split}
\frac{(2\pi)^{2\nu}}{2^{2l+2}} \frac{1}{\Gamma(l+1+\nu)\Gamma(l+1+\nu)} \left(\frac{(2l)!}{(l!)^2}\right)^2 \frac{2^{2l-1}}{(2l+1)!} \Gamma(l+1+\nu) &\Gamma(l+1-\nu)(l!)^2=\\ &\frac{(2\pi)^{2\nu}}{8(2l+1)}{{2l}\choose{l}} \frac{\Gamma(l+1-\nu)}{\Gamma(l+1+\nu)}.
\end{split}
\end{equation*}
For a general $|q|\leq l$, the identity \cite[p.89]{BruggemanMotohashi13}
\begin{equation*}
\int_0^{\infty}|v^l_q(y)|^2\frac{dy}{y}= {{2l}\choose{l-q}}^{-1}\int_0^{\infty}|v^l_l(y)|^2\frac{dy}{y}
\end{equation*}
completes the proof.
\end{proof}
Now we are ready to introduce the appropriate normalization.
\begin{defi}
For $y>0$, the Whittaker function is defined as
\begin{equation*}
\whit_{l,q,\nu,p}(y)=\J_1(a(y)) \frac{\sqrt{8(2l+1)}}{(2\pi)^{\Re\nu}}{{2l}\choose{l-q}}^{\frac{1}{2}} {{2l}\choose{l-p}}^{-\frac{1}{2}} \sqrt{\left|\frac{\Gamma(l+1+\nu)}{\Gamma(l+1-\nu)}\right|},
\end{equation*}
and for an arbitrary $y\in\CC^{\times}$, as
\begin{equation*}
\whit_{l,q,\nu,p}(y)=\whit_{l,q,\nu,p}(|y|)\left(\frac{y}{|y|}\right)^{-q}.
\end{equation*}
\end{defi}
Now Lemma \ref{normofjacquettransform} gives the complex analog of (\ref{normofrealwhittaker}):
\begin{equation}\label{normofcomplexwhittaker}
\int_{\CC^{\times}}|\whit_{l,q,\nu,p}(y)|^2d_{\CC}^{\times}y=1.
\end{equation}

Finally, we define the complex weight, the counterpart of the notion introduced in Definition \ref{weightofrealvectors}.
\begin{defi}\label{weightofcomplexvectors}
Assume $\phi:\GLtwo{2}{\CC}\rightarrow\CC$. We say it is of weight $(l,q)$, if it is smooth, and for the derived action of the Lie algebra $\frak{g}$,
\begin{equation*}
\mathbf{H}_2\phi=-iq\phi, \quad \Omega_{\frak{k}}\phi=-\frac{1}{2}(l^2+l)\phi,
\end{equation*}
where
\begin{equation*}
\Omega_{\frak{k}}=-\frac{1}{2}(\mathbf{H}_2^2+\mathbf{W}_1^2+\mathbf{W}_2^2).
\end{equation*}
\end{defi}

\section{Statement of the Kuznetsov formula}

We still need some further notations before stating the theorem, which we present in the following subsections.

\subsection{Fourier-Whittaker expansion}

Let $V_{\pi}$ be an irreducible automorphic representation generated by a cusp form or an Eisenstein series in the Hilbert space $L^2(\lfact{Z(\AAA)\GLtwo{2}{F}}{\GLtwo{2}{\AAA}})$.

Assume $\phi\in V_{\pi}$ is a smooth vector of pure weight, that is, at each archimedean quasifactor it is of pure weight, $q$ or $(l,q)$ (depending on $F_j$).

Then $\phi$ has a Fourier-Whittaker expansion, that is, for $y\in\AAA^{\times}$, $x\in\AAA$,
\begin{equation}\label{fourierwhittakerexpansion}
\phi\left( \begin{pmatrix} y & x \cr \ & 1 \end{pmatrix} \right)=\rho_{\phi,0}(y)+ \sum_{r\in F^{\times}}\rho_{\phi}(ry_{\fin}) (\sign(ry_{\infty}))^{\varepsilon}\whit(ry_{\infty})\psi(rx),
\end{equation}
where $\whit=\prod_{j=1}^{r+s}\whit_j$, where $\whit_j=\whit_{q_j,\nu_j}$ at real, $\whit_j=\whit_{l_j,q_j,\nu_j,p_j}$ at complex places. The spectral parameter ($\nu_j$ or $(\nu_j,p_j)$) is determined by $\pi$, the weight ($q_j$ or $(l_j,q_j$) is the weight of $\phi$, and $\varepsilon$ is a sign character coming from the $\varepsilon$-part of the real spectral parameter (note that it is not well-defined for discrete series representations).

Here, $\rho_{\phi}(ry_{\fin})$ depends only (apart from $\phi$) on the fractional ideal generated by $ry_{\fin}$ and it is zero if this ideal is nonintegral. We also note that if $V_{\pi}$ is cuspidal, then $\rho_{\phi,0}(y)=0$.

We normalize these coefficients by introducing $\lambda_{\phi}(\frak{m})$:
\begin{equation*}
\lambda_{\phi}(\frak{m})=\rho_{\phi}(\frak{m}) \sqrt{\norm{\frak{m}}}.
\end{equation*}

\subsection{Kloosterman sums}

We qoute the definition of Kloosterman sums from \cite[Definition 2]{Venkatesh}.
\begin{defi} Let $\frak{a}_1,\frak{a}_2$ be fractional ideals of $F$, and $\frak{c}$ be any ideal such that $\frak{c}^2\sim\frak{a}_1\frak{a}_2$ (i.e. they are in the same ideal class). Let then $c\in \frak{c}^{-1}$, $\alpha_1\in\frak{a}_1^{-1}\frak{d}^{-1}$, $\alpha_2\in\frak{a}_1\frak{d}^{-1}\frak{c}^{-2}$. We define the Kloosterman sum as
\begin{equation*}
KS(\alpha_1,\frak{a}_1; \alpha_2,\frak{a}_2; c,\frak{c})= \sum_{x\in (\rfact{\frak{a}_1\frak{c}^{-1}}{\frak{a}_1c})^{\times}} \psi_{\infty}\left(\frac{\alpha_1 x+\alpha_2 x^{-1}}{c}\right),
\end{equation*}
where the summation runs through the $x$'s which generate $\rfact{\frak{a}_1\frak{c}^{-1}}{\frak{a}_1c}$ as an $\frak{o}$-module, and $x^{-1}$ is the unique element in $(\rfact{\frak{a}_1^{-1}\frak{c}}{\frak{a}_1^{-1}c\frak{c}^2})^{\times}$ such that $xx^{-1}\in 1+c\frak{c}$.
\end{defi}

\subsection{Archimedean Bessel transforms and measures}\label{archimedeanbesseltransformsandmeasures}

In the Kuznetsov formula, on the so-called geometric side, the weight functions are Bessel transforms. Assume $f(\nu,p)$ is a function of the form
\begin{equation*}
f(\nu,p)=\prod_{j=1}^r f_j(\nu_j) \prod_{j=r+1}^{r+s} f_j(\nu_j,p_j),
\end{equation*}
where $f_j$'s are functions on the possible spectral parameter values: $\nu_j$ in the real case, $(\nu_j,p_j)$ in the complex case. Then let
\begin{equation*}
\mathcal{B}f_{(\nu,p)}(z)=\prod_{j=1}^{r}(\mathcal{B}_jf_j)_{\nu_j}(z) \prod_{j=r+1}^{r+s}(\mathcal{B}_jf_j)_{(\nu_j,p_j)}(z),
\end{equation*}
where $\mathcal{B}_jf_j$ is defined as follows. For $F_j\cong\RR$,
\begin{equation*}
\begin{split}
&(\mathcal{B}_jf_j)_{\nu_j}(z)=f_j(\nu_j)\cdot(\mathcal{B}_j)_{\nu_j}(z),\\ &(\mathcal{B}_j)_{\nu_j}(z)=\frac{2\pi}{\sin \pi\nu_j}(J_{-2\nu_j}(|z|)-J_{2\nu_j}(|z|)),
\end{split}
\end{equation*}
$J$ standing for the $J$-Bessel function.
For $F_j\cong\CC$,
\begin{equation*}
\begin{split}
&(\mathcal{B}_jf_j)_{(\nu_j,p_j)}(z)=f_j(\nu_j,p_j) \cdot(\mathcal{B}_j)_{(\nu_j,p_j)}(z),\\ &(\mathcal{B}_j)_{(\nu_j,p_j)}(z)= \frac{|z/2|^{-2\nu_j}(iz/|z|)^{2p_j}\mathcal{J}^*_{-\nu_j,-p_j}(z) -|z/2|^{2\nu_j}(iz/|z|)^{-2p_j}\mathcal{J}^*_{\nu_j,p_j}(z)}{\sin \pi(\nu_j-p_j)},
\end{split}
\end{equation*}
$\mathcal{J}^*$ is defined in Lemma \ref{jacquettransformofgoodmanwallach}.

Introduce moreover the measure $d\mu$ on the space of spectral parameters as follows. Again, we give it locally: $d\mu=\prod_j d\mu_j$. For $F_j\cong\RR$,
\begin{equation}\label{definitionofmureal}
\int f(\nu_j) d\mu_j(\nu_j)= \int_0^{i\infty}f(\nu_j)(-4\pi\nu_j)\tan\pi\nu_j \frac{d\nu_j}{2\pi i}+\sum_{2|2\nu_j+1,1<2\nu_j+1}f(\nu_j).
\end{equation}
For $F_j\cong\CC$,
\begin{equation}\label{definitionofmucomplex}
\int f(\nu_j,p_j) d\mu_j(\nu_j,p_j)= \sum_{p_j}\int_{(0)} f(\nu_j,p_j) (p_j^2-\nu_j^2)d\nu_j.
\end{equation}

\subsection{The Kuznetsov formula}\label{The Kuznetsov formula}

Let $h=\prod_jh_j$, where $h_j$'s are defined as follows. Let $a_j>1$ be given. Then at real places
\begin{equation*}
h_j(\nu_j)=\left\{\begin{array}{ll}e^{(\nu_j^2-\frac{1}{4})/a_j} & \mathrm{if}\ |\Re\nu_j|<\frac{2}{3}, \cr 1 & \mathrm{if}\ \nu_j\in\frac{1}{2}+\ZZ, \frac{3}{2}\leq|\nu_j|\leq a_j, \cr 0 & \mathrm{otherwise}.\end{array}\right.
\end{equation*}
While at complex places
\begin{equation*}
h_j(\nu_j,p_j)=\left\{\begin{array}{ll}e^{(\nu_j^2+p_j^2-1)/a_j} & \mathrm{if}\ |\Re\nu_j|<\frac{2}{3}, p_j\in\ZZ, |p_j|\leq a_j, \cr 0 & \mathrm{otherwise}.\end{array}\right.
\end{equation*}

Choose moreover an orthonormal basis $\mathbf{B}$ in the cuspidal spectrum consisting of pure weight forms $\mathbf{f}$. For an ideal $\frak{c}$, let us denote by $\mathbf{B}(\frak{c})$ the set of the basis elements $\mathbf{f}$ which are right $K(\frak{c})$-invariant: $\mathbf{f}(gk)=\mathbf{f}(g)$, if $k\in K(\frak{c})$. The spectral side of the formula also involves some contribution of the continuous spectrum, which is analogous to the cuspidal part. We will not spell this out explicitly: following \cite{Venkatesh}, denote it simply by $CSC$.

Fix some fractional ideals $\frak{a}^{-1},\frak{a}'^{-1}$, some nonzero elements $\alpha\in\frak{ad}^{-1},\alpha\in\frak{a}'\frak{d}^{-1}$. Let $C$ be a fixed set of narrow ideal class representatives $\frak{m}$, for which $\frak{m}^2\frak{aa}'^{-1}$ is a principal ideal generated by a totally positive element $\gamma_{\frak{m}}$, fixed once for all.

\begin{theo}\label{kuznetsov}
The sum formula holds for the weight function $h$, that is,
\begin{equation}\label{kuznetsovformula}
\begin{split}
&\left[K(\frak{o}):K(\frak{c})\right]^{-1} \sum_{\mathbf{f}\in\mathbf{B}(\frak{c})} h(\nu,p) \lambda_{\mathbf{f}}(\alpha\frak{a}^{-1}) \overline{\lambda_{\mathbf{f}}(\alpha'\frak{a}'^{-1})}+CSC=\\
&\const\Delta(\alpha\frak{a}^{-1},\alpha'\frak{a}'^{-1}) \int h(\nu,p) d\mu+\\ &\const \sum_{\frak{m}\in C}\sum_{c\in \frak{amc}}\sum_{\epsilon\in\rfact{\frak{o}_+^{\times}}{\frak{o}^{2\times}}} \frac{KS(\epsilon\alpha,\frak{a}^{-1}\frak{d}^{-1};\alpha'\gamma_{\frak{m}}, \frak{a}'^{-1}\frak{d}^{-1};c,\frak{a}^{-1}\frak{m}^{-1}\frak{d}^{-1}) }{\norm{c\frak{a}^{-1}\frak{m}^{-1}}}
\int\mathcal{B}h_{(\nu,p)}\left(\frac{|\alpha\alpha'\gamma_\frak{m} \epsilon|^{\frac{1}{2}}}{c}\right) d\mu,
\end{split}
\end{equation}
where $\Delta(\alpha\frak{a}^{-1},\alpha'\frak{a}'^{-1})$ is $1$ if $\alpha\frak{a}^{-1}=\alpha'\frak{a}'^{-1}$, and $0$ otherwise; $\frak{o}_+^{\times}$ stands for the group of totally positive units; for the integrals with respect to $d\mu$, recall (\ref{definitionofmureal}) and (\ref{definitionofmucomplex}). The constants denoted by $\const$ are nonzero and depend only on the field $F$ and the normalization of measures.
\end{theo}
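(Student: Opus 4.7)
Following the plan outlined in the introduction and the adelic template of \cite{Venkatesh}, I would attach Poincaré series $P_{\alpha}$, $P_{\alpha'}$ to the data $(\alpha,\frak{a})$ and $(\alpha',\frak{a}')$, and compute $\langle P_{\alpha},P_{\alpha'}\rangle$ in two ways: spectrally by Parseval against the orthonormal basis $\mathbf{B}\cup CSC$, and geometrically by unfolding one of the series and applying the Bruhat decomposition. To build $P_\alpha$, at finite places take the test function right-$K(\frak{c})$-invariant and supported on $N(\AAA_{\fin})K(\frak{c})$, so as to impose the level structure and produce the factor $[K(\frak{o}):K(\frak{c})]^{-1}$. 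At each archimedean place, choose a smooth function $k_j$ on $\lfact{Z(F_j)N(F_j)}{\GLtwo{2}{F_j}}$ of the correct weight (cf.\ Definitions \ref{weightofrealvectors}, \ref{weightofcomplexvectors}) whose Whittaker transform against $\whit_j$ on the Iwasawa $a(y)$-factor reproduces the corresponding factor of $h_j$; this can be arranged for the prescribed support of $h$ by spectral inversion. Combining these into $\phi_\alpha(g)=k(g)\psi(\alpha x)$ on $\lfact{Z(\AAA)N(\AAA)}{\GLtwo{2}{\AAA}}$, define
\begin{equation*}
P_\alpha(g) = \sum_{\gamma\in\lfact{Z(F)N(F)}{\GLtwo{2}{F}}}\phi_\alpha(\gamma g),
\end{equation*}
and similarly $P_{\alpha'}$, with absolute convergence deferred to the Appendix.

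\textbf{Spectral side.} Expanding $P_\alpha$ in $\mathbf{B}\cup CSC$ and unfolding $\langle P_\alpha,\mathbf{f}\rangle$ against the Fourier--Whittaker expansion \eqref{fourierwhittakerexpansion}, only the Fourier mode indexed by $\alpha\frak{a}^{-1}$ survives. The finite-adelic integral yields $\overline{\rho_{\mathbf{f}}(\alpha\frak{a}^{-1})}\sqrt{\norm{\alpha\frak{a}^{-1}}}=\overline{\lambda_{\mathbf{f}}(\alpha\frak{a}^{-1})}$, the restriction $\mathbf{f}\in\mathbf{B}(\frak{c})$, and the factor $[K(\frak{o}):K(\frak{c})]^{-1}$; multiplying by the analogous expression obtained from $\overline{\langle P_{\alpha'},\mathbf{f}\rangle}$, the archimedean Whittaker transforms combine, by the choice of the $k_j$, to $h(\nu,p)$. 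The continuous-spectrum contribution is handled identically and is recorded as $CSC$. This reproduces the left-hand side of \eqref{kuznetsovformula}.

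\textbf{Geometric side.} Alternatively, substitute the defining sum of $P_\alpha$ into $\langle P_\alpha,P_{\alpha'}\rangle$ and interchange with the integration. The identity double coset of $\lrfact{N(F)}{\GLtwo{2}{F}}{N(F)}$ contributes only when $\alpha\frak{a}^{-1}=\alpha'\frak{a}'^{-1}$, producing the diagonal term $\const\cdot\Delta(\alpha\frak{a}^{-1},\alpha'\frak{a}'^{-1})\int h(\nu,p)\, d\mu$ via the Plancherel identities \eqref{normofrealwhittaker} and \eqref{normofcomplexwhittaker} together with \eqref{definitionofmureal}, \eqref{definitionofmucomplex}. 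The big Bruhat cell is parametrized by the bottom row of a representative, organized according to the narrow class representatives $\frak{m}\in C$, the lower-left entry $c\in\frak{amc}$, and the unit classes $\epsilon\in\rfact{\frak{o}_+^{\times}}{\frak{o}^{2\times}}$. The finite-adelic integration produces the Kloosterman sum $KS(\epsilon\alpha,\frak{a}^{-1}\frak{d}^{-1};\alpha'\gamma_\frak{m},\frak{a}'^{-1}\frak{d}^{-1};c,\frak{a}^{-1}\frak{m}^{-1}\frak{d}^{-1})/\norm{c\frak{a}^{-1}\frak{m}^{-1}}$, while the archimedean integrals amount to pushing the $\psi_{\alpha'}$-twisted $N(F_j)$-integration past the Weyl element $\weyl$, i.e.\ computing a Jacquet integral of a Goodman--Wallach type test function; by Lemma \ref{jacquettransformofgoodmanwallach} this produces the kernel $\mathcal{J}^*_{\nu,p}$ at argument $4\pi\sqrt{\alpha\alpha'\gamma_\frak{m}\epsilon}/c$. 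Spectral inversion against $d\mu$ assembles these kernels, symmetrized in $(\nu,p)\leftrightarrow(-\nu,-p)$, into $\int\mathcal{B}h_{(\nu,p)}(|\alpha\alpha'\gamma_\frak{m}\epsilon|^{1/2}/c)\, d\mu$. Equating the two computations of $\langle P_\alpha,P_{\alpha'}\rangle$ yields \eqref{kuznetsovformula}.

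\textbf{Main obstacle.} The delicate part is the archimedean analysis at the complex places on the geometric side: one must legitimately push the Jacquet integral past the Goodman--Wallach operator and combine the two symmetric pieces to obtain the numerator of $(\mathcal{B}_j)_{(\nu_j,p_j)}$. The support conditions on $h$ (Gaussian in the principal and complementary strips, cut off beyond $a_j$, together with the prescribed discrete-series points at real places) are engineered precisely so that every interchange of summation, integration, and spectral inversion above is absolutely convergent, and so that Lemma \ref{jacquettransformofgoodmanwallach} applies in its original range $\Re\nu>0$ before analytic continuation to the spectrum.
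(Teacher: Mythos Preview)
Your outline follows the same Poincar\'e-series template as the paper, but it glosses over the two steps that the paper singles out as requiring real work, and in one of them your claim is not quite correct as stated.

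\textbf{The weight function is not a free parameter.} You write that at each archimedean place you ``choose a smooth function $k_j$ \ldots\ whose Whittaker transform against $\whit_j$ \ldots\ reproduces the corresponding factor of $h_j$; this can be arranged \ldots\ by spectral inversion.'' In the paper's setup this is false as a direct statement. The building blocks are inverse Lebedev transforms $\widetilde{\mathcal{L}}_q^{\alpha}\eta$, $\widetilde{\mathcal{L}}_{l,q}^{\alpha}\theta$ with $\eta,\theta$ in fixed test-function spaces, and after the spectral and geometric computations the weight function that actually appears is the \emph{triple product} $\overline{\eta}\,\theta\,\lambda$, where $\lambda=\lambda(\nu,q)$ (resp.\ $\lambda_l(\nu,p)$) is a fixed rational combination of Gamma factors coming from the normalization of the Whittaker functions. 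This $\lambda$ has zeros in the strip, so one cannot simply set $\theta=h/(\overline{\eta}\lambda)$ and stay in the test-function class. The paper's proof of Theorem~\ref{kuznetsov} (Section~7.2) deals with exactly this: one shows $\lambda\neq 0$ on the actual spectrum, constructs holomorphic nonvanishing approximants $\lambda_{\epsilon}$ with $\lambda/\lambda_{\epsilon}\to 1$ on the spectrum, chooses $\theta=h e^{-\delta\nu^2}/\lambda_{\epsilon}$ and $\eta=e^{\delta\nu^2}$, and then passes to the limit $\epsilon\to 0$ by dominated convergence. Your proposal omits this entire step.

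\textbf{The class-group averaging.} A second point you skip is how the sum over $\frak{m}\in C$ arises on the geometric side while the spectral side contains no $\omega_{\mathbf{f}}(\pi_{\frak{m}}^{-1})$. In the paper the inner product of two Poincar\'e series is computed for a \emph{fixed} $\frak{b}$ (giving a formula with $\omega_{\mathbf{f}}(\pi_{\frak{b}}^{-1})$ on the spectral side and a single Kloosterman series); one then averages over narrow class representatives and uses the twisting argument of Lemma~\ref{harcos} to kill the central-character factor, simultaneously passing from the larger basis $\mathbf{B}_{\FS}(\frak{c})$ to $\mathbf{B}(\frak{c})$. Your ``organized according to the narrow class representatives $\frak{m}\in C$'' hides a genuine input here.
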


\section{Poincar\'e series}\label{Poincare series}

\subsection{Transition between adelic and classical forms}\label{Transition between adelic and classical forms}

First we match the adelic automorphic forms with classical ones. We borrow the transition from \cite[Section 2.12]{BlomerHarcos}.

For any nonzero ideal $\frak{n}\subseteq\frak{o}$, let $\eta\in\AAA_{\fin}^{\times}$ be a finite idele representing $\frak{n}$ and now observe that
\begin{equation*}
\Gamma(\frak{n},\frak{c})Z(F_{\infty})g\mapsto \GLtwo{2}{F}Z(F_{\infty})g\begin{pmatrix}\eta^{-1} & \ \cr & 1\end{pmatrix}K(\frak{c}),\quad g\in\GLtwo{2}{F_{\infty}}
\end{equation*}
gives an embedding
\begin{equation*}
\lfact{\Gamma(\frak{n},\frak{c})Z(F_{\infty})}{\GLtwo{2}{F_{\infty}}} \hookrightarrow \lrfact{\GLtwo{2}{F} Z(F_{\infty})}{\GLtwo{2}{\AAA}}{K(\frak{c})}.
\end{equation*}
Using now strong approximation \cite[Theorem 3.3.1]{Bump}, and taking ideal class representatives $\frak{n}_1,\ldots,\frak{n}_h$, we obtain a decomposition
\begin{equation}\label{decompositiontransition}
\lrfact{\GLtwo{2}{F} Z(F_{\infty})}{\GLtwo{2}{\AAA}}{K(\frak{c})}\cong \coprod_{j=1}^h \lfact{\Gamma(\frak{n}_j,\frak{c})Z(F_{\infty})}{\GLtwo{2}{F_{\infty}}}.
\end{equation}
\begin{lemm}\label{measuretransition}
Using the measures induced by those we defined earlier, for any Borel set $U$ in the decomposition (\ref{decompositiontransition}),
\begin{equation*}
\mathrm{measure_{LHS}}(U)= [K(\frak{o}):K(\frak{c})]^{-1}\mathrm{measure_{RHS}}(U).
\end{equation*}
\end{lemm}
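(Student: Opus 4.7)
The plan is to reduce to one component of (\ref{decompositiontransition}) and to compute directly using the product structure of the adelic Haar measure.

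By additivity it suffices to treat a Borel set $U$ contained in one piece $\lfact{\Gamma(\frak{n}_j,\frak{c})Z(F_\infty)}{\GLtwo{2}{F_\infty}}$ of the decomposition. Pick a Borel fundamental domain $\tilde U\subseteq\GLtwo{2}{F_\infty}$ for $\Gamma(\frak{n}_j,\frak{c})Z(F_\infty)$ representing $U$; since quotienting by the discrete group $\Gamma(\frak{n}_j,\frak{c})$ preserves local measure, $\mathrm{measure_{RHS}}(U)=\mu_\infty(\tilde U/Z(F_\infty))$, where $\mu_\infty$ denotes our measure on $\lfact{Z(F_\infty)}{\GLtwo{2}{F_\infty}}$.

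Under the embedding, the image of $U$ in the LHS pulls back through the natural projection from $\lfact{Z(F_\infty)}{\GLtwo{2}{\AAA}}$ to the $\GLtwo{2}{F}$-orbit of
\[
S=\tilde U\cdot\begin{pmatrix}\eta_j^{-1} & \ \cr \ & 1\end{pmatrix}\cdot K(\frak{c})\subseteq\GLtwo{2}{\AAA}.
\]
Strong approximation, together with the very definition of $\Gamma(\frak{n}_j,\frak{c})$, implies that distinct points of $S$ remain distinct modulo $\GLtwo{2}{F}$, so $S$ is a genuine fundamental domain. Hence the induced LHS measure (built as the pushforward along this projection) equals the measure of $S$ in the product-measure space $\lfact{Z(F_\infty)}{\GLtwo{2}{\AAA}}$. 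Since this measure factors as $\mu_\infty\otimes\mu_\fin$ and $\mu_\fin$ is right invariant (each $\GLtwo{2}{F_\frak{p}}$ being unimodular), right translation by $\begin{pmatrix}\eta_j^{-1} & \ \cr \ & 1\end{pmatrix}$ preserves $\mu_\fin$, and we obtain $\mu(S)=\mu_\infty(\tilde U/Z(F_\infty))\cdot\mu_\fin(K(\frak{c}))$.

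Finally, the normalizations $\mu_\fin(K(\frak{o}_\frak{p}))=1$ give $\mu_\fin(K(\frak{o}))=1$ and hence $\mu_\fin(K(\frak{c}))=[K(\frak{o}):K(\frak{c})]^{-1}$, which yields the claim. The only subtlety worth flagging is the measure convention on the double coset space LHS: the factor $[K(\frak{o}):K(\frak{c})]^{-1}$ is exactly $\mu_\fin(K(\frak{c}))$, so $\mathrm{measure_{LHS}}$ must be interpreted as the pushforward of the adelic measure (not the disintegrated quotient measure, which would absorb this factor); this matches the convention of Venkatesh and \cite{BlomerHarcos}, and once it is fixed, the calculation above is essentially automatic.
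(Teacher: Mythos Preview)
Your argument is correct and is the standard unfolding of the product Haar measure; the paper itself gives no proof here, merely citing \cite[p.~34]{BlomerHarcos}, so there is nothing to compare against beyond noting that your computation is presumably what that reference contains. Your final paragraph correctly isolates the only genuine issue, namely that ``measure on the LHS'' means the pushforward of the measure on $\lfact{Z(F_\infty)\GLtwo{2}{F}}{\GLtwo{2}{\AAA}}$ to the $K(\frak{c})$-orbit space (equivalently, integration of $K(\frak{c})$-invariant functions over the single quotient, as in the definition of $\FS$), not a normalized quotient by $K(\frak{c})$; with that convention fixed the factor $\mu_{\fin}(K(\frak{c}))=[K(\frak{o}):K(\frak{c})]^{-1}$ drops out exactly as you say.
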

\begin{proof}
See \cite[p.34]{BlomerHarcos}.
\end{proof}
Now we turn to the automorphic forms. Let
\begin{equation*}
\begin{split}
\FS&= \left\{f:\GLtwo{2}{\AAA} \rightarrow\CC:\int_{\lfact{Z(\AAA)\GLtwo{2}{F}}{\GLtwo{2}{\AAA}}} |f|^2<\infty,\right.\\ &\left.f\left(\gamma\begin{pmatrix}z & \ \cr \ & z \end{pmatrix}gk\right)=f(g),\mathrm{\ if\ }\gamma\in\GLtwo{2}{F},z\in F_{\infty}^{\times},k\in K(\frak{c})\right\}.
\end{split}
\end{equation*}
This is a larger space of automorphic functions than one usually deals with: this is the $L^2$ space of the LHS of (\ref{decompositiontransition}). Then to any $\phi\in \FS$, (\ref{decompositiontransition}) associates automorphic functions, which we denote by
\begin{equation*}
\phi^{\frak{n}}(g_{\infty})=\phi\left(\begin{pmatrix}\eta^{-1} & \ \cr \ & 1\end{pmatrix}g_{\infty}\right), \quad g_{\infty}\in\GLtwo{2}{F_{\infty}}.
\end{equation*}

\subsection{Definition of Poincar\'e series}

Fix some nonzero ideals $\frak{a},\frak{b}$. We define the following characters on $N(F_{\infty})$. For $x\in F_{\infty}$, let $\psi_1(x)=\psi_{\infty}(\alpha x)$ and $\psi_2(x)=\psi_{\infty}(\alpha' x)$, where $\alpha\in\frak{ad}^{-1}$, $\alpha'\in\frak{ab}^2\frak{d}^{-1}$ are nonzero elements with the property that $\alpha/\alpha'$ it totally positive (that is, positive at all real places). They give rise naturally to characters of $N(F_{\infty})$ which are trivial on $\Gamma(\frak{a},\frak{c})\cap N(F_{\infty})$, $\Gamma(\frak{ab}^2,\frak{c})\cap N(F_{\infty})$, respectively.

The building blocks of the Poincar\'e series are functions $f_1,f_2$ on $\lfact{Z(F_{\infty})}{\GLtwo{2}{F_{\infty}}}$ with the prescribed left action of $N(F_{\infty})$:
\begin{equation}\label{naction}
f_1\left(\begin{pmatrix} 1 & x \cr \ & 1\end{pmatrix}\begin{pmatrix} z & \ \cr \ & z\end{pmatrix}g\right)=\psi_1(x)f_1(g),\quad
f_2\left(\begin{pmatrix} 1 & x \cr \ & 1\end{pmatrix}\begin{pmatrix} z & \ \cr \ & z\end{pmatrix}g\right)=\psi_2(x)f_2(g).
\end{equation}

Then the Poincar\'e series are defined on the LHS of (\ref{decompositiontransition}) as
\begin{equation}\label{poincareseries1}
P_1^{\frak{a}}(g)= \sum_{\gamma\in \lfact{Z_{\Gamma}\Gamma_N(\frak{a},\frak{c})}{\Gamma(\frak{a},\frak{c})}} f_1(\gamma g),\quad P_2^{\frak{a}\frak{b}^2}(g)= \sum_{\gamma\in \lfact{Z_{\Gamma}\Gamma_N(\frak{ab}^2,\frak{c})}{\Gamma(\frak{ab}^2,\frak{c})}} f_2(\gamma g),
\end{equation}
with $Z_{\Gamma}$ standing for the center, $\Gamma_N$ for the upper triangular unipotent subgroup (the intersection with $N(F_{\infty})$) of the corresponding group $\Gamma$; this defines both $P_1$ and $P_2$ only on a single component (in the decomposition (\ref{decompositiontransition})), on other components, let them be zero.

Of course, there might be convergence problems. If we can define the building blocks such that the resulting Poincar\'e series are absolutely convergent, then this definition is valid. Unfortunately, when all the archimedean places are complex, we are not able to guarantee the absolute convergence, so in this case, we have to clarify, what we mean by the sums in (\ref{poincareseries1}). We will return to this problem in the Appendix. Until then, we always indicate how this 'case $r=0$' can be treated.

Our building blocks will be pure tensors, $f_i(x)=\prod_{j=1}^{r+s}f_{i,j}(x)$ for $i=1,2$. In the next subsections, we give the local definitions.

\subsection{Building blocks at real places}\label{Building blocks at real places}

In the construction of a real factor of our building blocks, we mainly follow \cite[Section 3.2]{BruggemanMiatello}, where the authors work with the group $\SLtwo{2}{\RR}$.

For a given $\sigma\in(\frac{1}{2},1)$ and even integer $u$, we denote by $\mathcal{T}_{u,\sigma}$ the linear space of functions $\eta$ defined on the set
\begin{equation*}
\{\nu\in\CC:|\Re\nu|\leq\sigma\}\cup\left\{\frac{1}{2},\frac{3}{2},\ldots\right\}
\end{equation*}
and satisfying the conditions
\begin{enumerate}
\item $\eta$ is holomorphic and even on a neighborhood of the strip $|\Re\nu|\leq\sigma$,

\item $\eta(\nu)\ll e^{-\frac{\pi}{2}|\Im\nu|}(1+|\Im\nu|)^{-A}$ for each $A>0$,

\item $\eta\left(\frac{b-1}{2}\right)=0$, if $b$ is an even integer such that $b>u$.
\end{enumerate}

For $\eta\in\mathcal{T}_{u,\sigma}$ define the following function on the set $y>0$ (see \cite[(3.10)]{BruggemanMiatello})
\begin{equation*}
\begin{split}
(\widetilde{\mathcal{L}}_u\eta)(y)&= \frac{1}{4\pi i}\int_{(0)}\eta(\nu)W_{u/2,\nu}(y) \left| \frac{\Gamma\left(\frac{1}{2}+\nu-\frac{u}{2}\right)}{\Gamma(2\nu)}\right|^2d\nu\\
&+\sum_{2|b,1<b\leq u}\eta\left(\frac{b-1}{2}\right)W_{u/2,(b-1)/2}(y) \frac{b-1}{\left(\frac{u-b}{2}\right)!\left(\frac{u+b-2}{2}\right)!}.
\end{split}
\end{equation*}

Now for some fixed $q\in\ZZ,\alpha\in\RR\setminus\{0\}$ and $\eta\in\mathcal{T}_{u,\sigma}$, we define the following function using the Iwasawa decomposition. First, if $\det g>0$, then let
\begin{equation*}
(\widetilde{\mathcal{L}}_q^{\alpha}\eta)(g)=(\widetilde{\mathcal{L}}_q^{\alpha}\eta)\left(\begin{pmatrix}1 & x \cr \ & 1\end{pmatrix} \begin{pmatrix}y & \ \cr \ & 1\end{pmatrix}\begin{pmatrix}\cos\theta & \sin\theta \cr -\sin\theta & \cos\theta\end{pmatrix}\right)=e^{2\pi i\alpha x}(\widetilde{\mathcal{L}}_{q\sign(\alpha)}\eta)(4\pi|\alpha|y)e^{iq\theta}.
\end{equation*}
If $\det g<0$, then $g=g'\begin{pmatrix}-1 & \ \cr \ & 1\end{pmatrix}$ with $\det g'>0$ and in this case we simply prescribe $(\widetilde{\mathcal{L}}_q^{\alpha}\eta)(g) =(\widetilde{\mathcal{L}}_q^{-\alpha}\eta)(g')$.

Now let $\eta,\theta\in\mathcal{T}_{u,\sigma}$. For real factors of $f_1,f_2$, choose $\widetilde{\mathcal{L}}_q^{\alpha}\eta, \widetilde{\mathcal{L}}_q^{\alpha'}\theta$, respectively. Of course, we may use different $\eta$'s and $\theta$'s at different real places.

Before turning to complex places, note that the functions we defined transform like weight $q$ functions on the positive domain $\det g>0$, and like weight $-q$ functions on the negative domain $\det g<0$. 

\subsection{Building blocks at complex places}\label{Building blocks at complex places}

In the construction of a complex factor of our bulding blocks, we follow \cite[Section 7]{BruggemanMotohashi} and \cite[Section 9.1]{Lokvenec}.

Let $l>0$ be an integer, $|q|\leq l$. Following \cite[Theorem 7.1]{BruggemanMotohashi} and
\cite[Definition 9.1.3]{Lokvenec}, for a given $\sigma\in(1,\frac{3}{2})$, we denote by $\mathcal{T}^l_{\sigma}$ the linear space of functions $\eta$ defined
on the set
\begin{equation*}
\{(\nu,p)\in\CC\times\ZZ:|\Re\nu|\leq\sigma,|p|\leq l\}
\end{equation*}
and satisfying the following conditions
\begin{enumerate}
\item $\eta$ is holomorphic on a neighborhood of the strip $|\Re\nu|\leq\sigma$,

\item $\eta(\nu,p)\ll e^{-\frac{\pi}{2}|\Im\nu|}(1+|\Im\nu|)^{-A}$ for any $A>0$,

\item $\eta(\nu,p)=\eta(-\nu,-p)$.
\end{enumerate}

Now for the given $l,q$, some fixed $\alpha$ and $\eta\in\mathcal{T}^l_{\sigma}$, let
\begin{equation*}
\begin{split}
(\widetilde{\mathcal{L}}_{l,q}^{\alpha}\eta)(g)&=\frac{|\alpha|}{2\pi^3 i}\sum_{|p|\leq l}\frac{(i\alpha/|\alpha|)^p}{(2l+1)^{-1/2} {{2l}\choose{l-p}}^{1/2}{{2l}\choose{l-q}}^{-1/2}}\cdot\\ &\int_{(0)}\eta(\nu,p)(2\pi|\alpha|)^{-\nu}\Gamma(l+1+\nu) \J_{\alpha}(g)\nu^{\epsilon(p)}\sin\pi(\nu-p)d\nu
\end{split}
\end{equation*}
with $\epsilon(0)=1$, $\epsilon(p)=-1$ for $p\in\ZZ\setminus\{0\}$. Note that this function differs from the function appearing in \cite[Theorem 9.1.4]{Lokvenec} by the factor $|\alpha|$.

Now let $\eta,\theta\in\mathcal{T}^l_{\sigma}$. For a complex factor of $f_1$, choose $\widetilde{\mathcal{L}}_{l,q}^{\alpha}\eta$ and for $f_2$, choose $\widetilde{\mathcal{L}}_{l,q}^{\alpha'}\theta$. Of course, we may use different $\eta$'s and $\theta$'s at different complex places.

\section{Scalar product of Poincar\'e series}

\subsection{Geometric description}\label{Geometric description}

Let $\pi_{\frak{b}}$ be a finite idele representing $\frak{b}$. With the abbreviation
\begin{equation*}
\pi_{\frak{b}}^{-1}P_2= \begin{pmatrix} \pi_{\frak{b}}^{-1} & \cr & \pi_{\frak{b}}^{-1} \end{pmatrix}P_2,
\end{equation*}
consider the inner product $\langle \pi_{\frak{b}}^{-1}P_2,P_1\rangle$. The Poincar\'e series $P_1,P_2$ are defined in the space $\FS$, and the inner product is also understood there.
In what follows, we shall expand this both geometrically and spectrally, and the equation of these expressions will give rise to the Kuznetsov formula.

First we note the following consequence of strong approximation (see \cite[(83)]{Venkatesh}). Given an element $g_{\infty}\in\GLtwo{2}{F_{\infty}}$, there exist elements $\gamma\in\GLtwo{2}{F}$, $\kappa_\gamma\in K(\frak{c})$ such that
\begin{equation*}
\begin{pmatrix} \pi_{\frak{b}}^{-1} & \ \cr \ & \pi_{\frak{b}}^{-1} \end{pmatrix}
\begin{pmatrix} \pi_{\frak{a}}^{-1} & \ \cr \ & 1 \end{pmatrix}g_{\infty}=\gamma^{-1}
\begin{pmatrix} (\pi_{\frak{b}}^2\pi_{\frak{a}})^{-1} & \ \cr \ & 1 \end{pmatrix}g_{\infty}'\kappa_{\gamma}.
\end{equation*}
Then
\begin{equation*}
\gamma\in\GLtwo{2}{F}\cap\GLtwo{2}{F_{\infty}}
\begin{pmatrix} (\pi_{\frak{b}}^2\pi_{\frak{a}})^{-1} & \ \cr \ & 1 \end{pmatrix}
\kappa_{\gamma}\begin{pmatrix} \pi_{\frak{a}}\pi_{\frak{b}} & \ \cr \ & \pi_{\frak{b}} \end{pmatrix}.
\end{equation*}
We denote the set of such $\gamma$'s by $\Gammatr$, following \cite{Venkatesh} in notation (however, our $\Gammatr$ is not exactly the same as Venkatesh's one, because of the different normalization of the congruence subgroup $K(\frak{c})$). For $\gamma\in\Gammatr$, we denote by $\kappa_\gamma$ a corresponding element from $K(\frak{c})$. Fix an element $\gamma^*\in\Gammatr$, then $\Gammatr =\gamma^*\Gamma(\frak{a},\frak{c})=\Gamma(\frak{a}\frak{b}^2,\frak{c})\gamma^*$.

When we compute the inner product $\langle \pi_{\frak{b}}^{-1}P_2,P_1\rangle$ using the decomposition (\ref{decompositiontransition}), we see that only the $\frak{a}$-part, that is, $\pi_{\frak{b}}^{-1}P_2^{\frak{a}}$ is relevant (on the other components, at least one of $\pi_{\frak{b}}^{-1}P_2$ and $P_1$ is zero). The definition of $P_2$ and a simple computation (see \cite[(85)]{Venkatesh}) give
\begin{equation*}
\pi_{\frak{b}}^{-1}P_2^{\frak{a}}(g)= \sum_{\gamma\in\lfact{Z_{\Gamma}\Gamma_N(\frak{ab}^2,\frak{c})}{\Gammatr}}f_2(\gamma g)
\end{equation*}
by noting that in the more complicated case $r=0$, we think of the RHS in the sense of (\ref{poincareseries2}), (\ref{poincareseries2'}) and (\ref{poincareseries3}): for some $\Re\nu>1$, we have absolute convergence, then let $\nu\rightarrow 1+$ (see the Appendix).
Setting $I=\langle \pi_{\frak{b}}^{-1}P_2,P_1 \rangle$, we can unfold the integral as
\begin{equation}\label{unfoldedgeometricexpansion}
\begin{split}
I&=\int_{\lfact{Z(F_{\infty})\Gamma_N(\frak{a},\frak{c})}{\GLtwo{2}{F_{\infty}}}} \overline{f_1(g)} \sum_{\gamma\in\lfact{Z_{\Gamma}\Gamma_N(\frak{ab}^2,\frak{c})}{\Gammatr}} f_2(\gamma g)dg\\ &=\int_{\lfact{Z(F_{\infty})N(F_{\infty})}{\GLtwo{2}{F_{\infty}}}}\overline{f_1(g)} \int_{\lfact{\Gamma_N(\frak{a},\frak{c})}{N(F_{\infty}})} \overline{\psi_1(n)} \sum_{\gamma\in\lfact{Z_{\Gamma}\Gamma_N(\frak{ab}^2,\frak{c})}{\Gammatr}} f_2(\gamma ng)dndg,
\end{split}
\end{equation}
here the inner integral is essentially the Fourier coefficient of $\pi_{\frak{b}}^{-1}P_2$ corresponding to the character $\psi_1$. This unfolding is clearly valid for $r\neq 0$; while for $r=0$, we temporarily use the functions $f_1^{\nu}$, $f_2^{\nu}$, $P_1^{\nu}$, $P_2^{\nu}$, $I^{\nu}$ in the sense of (\ref{poincareseries2'}), (\ref{poincareseries3}) with any $\Re\nu>1$ (see the Appendix).

Let us split this up as $I=I_1+I_2$ according to the small and the large Bruhat cell, that is, $I_1$ is the same integral as $I$, but in the inner summation, we let $\gamma$ be upper-triangular, and $I_2$ corresponds to the rest.

First we compute $I_1$. Observe that $I_1$ is an empty integral unless $\frak{b}$ is principal. Assume then that $\frak{b}$ is generated by an
element $[\frak{b}]$. By \cite[Lemma 14]{Venkatesh},
\begin{equation*}
I_1=\sum_{\epsilon\in\rfact{\frak{o}^{\times}}{\frak{o}^{2\times}}}
\int_{\lfact{\Gamma_N(\frak{a},\frak{c})Z(F_{\infty})}{\GLtwo{2}{F_{\infty}}}} \overline{f_1(g)}f_2\left(
\begin{pmatrix}[\frak{b}]^{-1}\epsilon & \ \cr \ & [\frak{b}] \end{pmatrix}g\right)dg.
\end{equation*}
Let
\begin{equation*}
\Delta(\alpha,\alpha'[\frak{b}]^{-2})=\left\{\begin{array}{ll}1 & \mathrm{if}\
\exists\epsilon_0\in\frak{o}^{\times}: \alpha=\alpha'[\frak{b}]^{-2}\epsilon_0\mathrm{,} \cr 0 & \mathrm{otherwise.} \end{array}\right.
\end{equation*}
Take such an $\epsilon_0$ (if exists). Now let $N(F_{\infty})$ act on the left, we obtain by (\ref{naction})
\begin{equation*}
I_1=\const\Delta(\alpha,\alpha'[\frak{b}]^{-2})\norm{\frak{a}^{-1}}
\int_{\lfact{N(F_{\infty})Z(F_{\infty})}{\GLtwo{2}{F_{\infty}}}}\overline{f_1(g)} f_2\left( \begin{pmatrix}[\frak{b}]^{-1}\epsilon_0 & \ \cr \ & [\frak{b}] \end{pmatrix}g\right)dg.
\end{equation*}

Now we can turn our attention to the large Bruhat cell. First we state the explicit Bruhat decomposition.
\begin{lemm}\label{bruhat} On the large Bruhat cell, we have
\begin{equation*}
\begin{pmatrix}a & b \cr c & d\end{pmatrix}=\begin{pmatrix}1 & ac^{-1} \cr \ & 1\end{pmatrix}\begin{pmatrix} \ & 1 \cr -1 & \ \end{pmatrix} \begin{pmatrix}c & \ \cr \ &
c^{-1}(ad-bc)\end{pmatrix}\begin{pmatrix}1 & dc^{-1} \cr \ & 1\end{pmatrix}.  \qed 
\end{equation*}
\end{lemm}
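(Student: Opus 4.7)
The lemma is a purely algebraic identity valid whenever $c$ is invertible in the ambient ring. My plan is to verify it by direct matrix multiplication on the right-hand side, which is elementary; no structural argument is needed.

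I would proceed from right to left in three stages. First, multiplying the diagonal matrix $\mathrm{diag}(c,\, c^{-1}(ad-bc))$ by the rightmost unipotent $\begin{pmatrix}1 & dc^{-1} \\ 0 & 1\end{pmatrix}$ gives the upper-triangular matrix $\begin{pmatrix} c & d \\ 0 & c^{-1}(ad-bc)\end{pmatrix}$. Second, left-multiplication by the Weyl element swaps the two rows with the appropriate sign, putting $\pm c$ in position $(2,1)$ and $\pm c^{-1}(ad-bc)$ in position $(1,2)$ while zeroing the diagonal. Third, left-multiplication by $\begin{pmatrix}1 & ac^{-1} \\ 0 & 1\end{pmatrix}$ adds a copy of the second row (scaled by $ac^{-1}$) to the first row: the $(1,1)$ entry becomes $ac^{-1}\cdot c = a$, and the $(1,2)$ entry becomes $c^{-1}(ad-bc) - d\cdot ac^{-1}$, in which the $ad$ terms cancel to leave $-bc \cdot c^{-1} = -b$ (up to the same global sign coming from the Weyl element).

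The only point requiring care is the sign convention for the Weyl element, since the two reasonable choices $\begin{pmatrix}0 & 1\\-1 & 0\end{pmatrix}$ and $\begin{pmatrix}0 & -1\\1 & 0\end{pmatrix}$ differ by $-\mathrm{id}$ and thus produce $\pm\begin{pmatrix}a & b\\c & d\end{pmatrix}$ on the right; one selects the sign so that the identity reproduces the original matrix rather than its negative. Beyond this bookkeeping there is no obstacle. Note that the decomposition is meaningful only when $c$ is invertible, which is precisely the defining condition of the \emph{large} Bruhat cell, as reflected in the statement.
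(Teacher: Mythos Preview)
Your approach is correct and matches the paper's treatment: the paper gives no proof at all (the lemma is stated with an immediate \qed), so direct matrix multiplication is exactly the intended verification. Your observation about the sign is apt; with the paper's convention $\weyl=\begin{pmatrix}0&1\\-1&0\end{pmatrix}$ the right-hand side in fact equals $-\begin{pmatrix}a&b\\c&d\end{pmatrix}$, which is harmless since the decomposition is only ever used modulo the center $Z$.
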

For $\tau\in\lrfact{Z_{\Gamma}\Gamma_N(\frak{ab}^2,{c})}{\Gammatr}{\Gamma_N(\frak{a},\frak{c})}$, denote by $[\tau]\in\Gammatr$ any
representative. Then
\begin{equation*}
I_2=\int_{\lfact{\Gamma_N(\frak{a},\frak{c})Z(F_{\infty})}{\GLtwo{2}{F_{\infty}}}}
\sum_{\substack{\tau\in\lrfact{Z_{\Gamma}\Gamma_N(\frak{ab}^2,\frak{c})}{\Gammatr}{\Gamma_N(\frak{a},\frak{c})}\\ [\tau]\notin B(F_{\infty})}}
\sum_{\mu\in\Gamma_N(\frak{a},\frak{c})}\overline{f_1(g)}f_2([\tau]\mu g)dg.
\end{equation*}
Now folding together the integral and the $\mu$-sum, we obtain
\begin{equation*}
I_2=\sum_{\substack{\tau\in\lrfact{Z_{\Gamma}\Gamma_N(\frak{ab}^2,\frak{c})}{\Gammatr}{\Gamma_N(\frak{a},\frak{c})}\\ [\tau]\notin B(F_{\infty})}}
\int_{\lfact{Z(F_{\infty})}{\GLtwo{2}{F_{\infty}}}}\overline{f_1(g)}f_2([\tau]g)dg.
\end{equation*}
Let $[\tau]=n_{1,[\tau]}\weyl a_{[\tau]}n_{2,[\tau]}$ according to the Bruhat decomposition. Then by (\ref{naction}),
\begin{equation*}
\begin{split}
I_2&=\sum_{\substack{\tau\in\lrfact{Z_{\Gamma}\Gamma_N(\frak{ab}^2,\frak{c})}{\Gammatr}{\Gamma_N(\frak{a},\frak{c})}\\ [\tau]\notin B(F_{\infty})}}
\psi_1(n_{2,[\tau]})\psi_2(n_{1,[\tau]}) \cdot\int_{\lfact{Z(F_{\infty})}{\GLtwo{2}{F_{\infty}}}}\overline{f_1(g)}f_2(\weyl a_{[\tau]}g)dg\\
&=\sum_{\substack{\tau\in\lrfact{Z_{\Gamma}\Gamma_N(\frak{ab}^2,\frak{c})}{\Gammatr}{\Gamma_N(\frak{a},\frak{c})}\\ [\tau]\notin B(F_{\infty})}}
\psi_1(n_{2,[\tau]})\psi_2(n_{1,[\tau]})\\ &\qquad\qquad\qquad\qquad\qquad\qquad\qquad \cdot\int_{\lfact{N(F_{\infty})Z(F_{\infty})}{\GLtwo{2}{F_{\infty}}}}\overline{f_1(g)} \int_{N(F_{\infty})}\overline{\psi_1(n)}f_2(\weyl a_{[\tau]}ng)dndg.
\end{split}
\end{equation*}
By Lemma \ref{bruhat},
\begin{equation*}
n_{2,[\tau]}=\begin{pmatrix}1 & dc^{-1} \cr \ & 1 \end{pmatrix},\qquad n_{1,[\tau]}=\begin{pmatrix}1 & ac^{-1} \cr \ & 1 \end{pmatrix}.
\end{equation*}
Putting everything together and using again \cite[Lemma 14]{Venkatesh} for the explicit description of $\Gammatr$ (keeping in mind that our normalization differs a little), we obtain
\begin{equation}\label{geometricside}
\begin{split}
I&=\const\Delta(\alpha,\alpha'[\frak{b}]^{-2})\norm{\frak{a}^{-1}}
\int_{\lfact{N(F_{\infty})Z(F_{\infty})}{\GLtwo{2}{F_{\infty}}}}\overline{f_1(g)} f_2\left(\begin{pmatrix}[\frak{b}]^{-1}\epsilon_0 & \ \cr \ & [\frak{b}] \end{pmatrix}g\right)dg\\
&+\const\sum_{c\in \frak{abc},\epsilon\in\rfact{\frak{o}^{\times}}{\frak{o}^{2\times}}}
KS(\epsilon\alpha,\frak{a}^{-1}\frak{d}^{-1};\alpha', \frak{a}^{-1}\frak{b}^{-2}\frak{d}^{-1};c,\frak{a}^{-1}\frak{b}^{-1}\frak{d}^{-1})\\ &\cdot\int_{\lfact{N(F_{\infty})Z(F_{\infty})}{\GLtwo{2}{F_{\infty}}}}\overline{f_1(g)} \int_{N(F_{\infty})}\overline{\psi_1(n)}f_2\left(\weyl\begin{pmatrix}1 & \ \cr \ & \epsilon c^{-2}
\end{pmatrix}ng\right)dndg.
\end{split}
\end{equation}

Now in case of $r=0$, we may move to the limit $\nu=1$ by analytic continuation: the LHS made sense for $\nu=1$ even in the beginning; while each integral on the RHS is absolutely convergent even for $\nu=1$ (see the Appendix, and also the forthcoming computation on the Jacquet transform of $f_2$, from which the needed magnitude will be trivial).

\subsection{Spectral description}\label{Spectral description}

On the other hand, we decompose spectrally the inner product. With this aim in mind, take an orthonormal system $\mathbf{B}_{\FS}$ in the cuspidal spectrum, which consists of pure weight forms $\mathbf{f}$ and moreover assume that $Z(\AAA)$ acts on each element $\mathbf{f}\in\mathbf{B}_{\FS}$ via multiplication by some class group character $\omega_{\mathbf{f}}$ ($\mathbf{B}_{\FS}$ is larger than $\mathbf{B}$ introduced in Section \ref{The Kuznetsov formula}, see \ref{Transition between adelic and classical forms}). Observe that if $K(\frak{c})$ acts nontrivially on some $\mathbf{f}\in\mathbf{B}_{\FS}$, then $\mathbf{f}$ is orthogonal to our Poincar\'e series. Therefore from now on, we can restrict the cuspidal summation to $\mathbf{B}_{\FS}(\frak{c})$. We have
\begin{equation*}
\langle\pi_{\frak{b}}^{-1}P_2,P_1\rangle =\sum_{\mathbf{f}\in\mathbf{B}_{\FS}(\frak{c})}\overline{\langle P_1,\mathbf{f}\rangle}\langle
\pi_{\frak{b}}^{-1}P_2,\mathbf{f}\rangle +\int_{0}^{\infty}\bigoplus_{\chi=|\cdot|^{iy}\ \mathrm{on}\ F_{\infty,+}^{\mathrm{diag}}}
\langle\mathrm{pr}_{V_{\chi}}P_1,\mathrm{pr}_{V_{\chi}}\pi_{\frak{b}}^{-1} P_2\rangle_{V_{\chi}} d^{\times}y.
\end{equation*}
Here $\mathrm{pr}_{V_{\chi}}$ denotes the $\chi$-part of the spectral decomposition, see \cite{GelbartJacquet}. The discussion of the contribution of the continuous spectrum $CSC$ is analogous to the discrete spectrum which we present below. We note that the contribution of discrete but non-cuspidal representations is zero (this can be easily seen from their Fourier-Whittaker expansion).

As for the cuspidal part,
\begin{equation*}
\begin{split}
\overline{\langle P_1,\mathbf{f}\rangle}&= \left[K(\frak{o}):K(\frak{c})\right]^{-\frac{1}{2}}
\int_{\lfact{Z(F_{\infty})\Gamma_N(\frak{a},\frak{c})}{\GLtwo{2}{F_{\infty}}}} \overline{f_1(g)}\mathbf{f}^{\frak{a}}(g)\\
&=\left[K(\frak{o}):K(\frak{c})\right]^{-\frac{1}{2}} \int_K\int_A\overline{f_1\left(\begin{pmatrix}y & \ \cr \ & 1\end{pmatrix}k\right)}y^{-2\rho}\\ &\cdot
\int_{\lfact{\Gamma_N(\frak{a},\frak{c})}{N(F_{\infty})}} \psi_1(-n)\mathbf{f}^{\frak{a}}\left(\begin{pmatrix}1 & n \cr \ &
1\end{pmatrix}\begin{pmatrix}y & \ \cr \ & 1\end{pmatrix}k\right)dnd^{\times}ydk,
\end{split}
\end{equation*}
where the quasifactor $\rho_j$ is defined to be $\frac{1}{2}\deg(F_{\infty,j}:\RR)$, and the factor $\left[K(\frak{o}):K(\frak{c})\right]^{-\frac{1}{2}}$ is explained in Lemma \ref{measuretransition}.

Using the $N(F_{\infty})$- and $K(F_{\infty})$-transformation properties and the normalization of Fourier coeffecients, we see
\begin{equation*}
\begin{split}
\overline{\langle P_1,\mathbf{f}\rangle}=&\ \const\left[K(\frak{o}):K(\frak{c})\right]^{-\frac{1}{2}} \lambda_{\mathbf{f}}(\alpha\frak{a}^{-1}) \frac{\sqrt{\mathcal{N}(\frak{a}^{-1})}}{\sqrt{\mathcal{N}(\alpha)}}\\
&\prod_{j=1}^r\left\{\int_0^{\infty} \overline{f_1\left(\begin{pmatrix} y_j & \ \cr \ & 1\end{pmatrix}\right)}\whit_j^{(q_j)}(|\alpha_j|y_j)|y_j|^{-1}d^{\times}y_j\right.\\ &\left.\qquad \mathrm{OR}\ (-1)^{\varepsilon_j}\int_0^{\infty}\overline{f_1\left(\begin{pmatrix} y_j & \ \cr \ & 1\end{pmatrix}\right)}\whit_j^{(-q_j)}(|\alpha_j|y_j)|y_j|^{-1}d^{\times}y_j\right\}\\ &\prod_{j=r+1}^{r+s}\int_0^{\infty}\overline{f_1\left(\begin{pmatrix} y_j & \ \cr \ & 1\end{pmatrix}\right)}\whit_j(\alpha_jy_j)|y_j|^{-2}d^{\times}y_j,
\end{split}
\end{equation*}
by the 'OR' we mean the dependence on the real weight of $\mathbf{f}$, the superscripts $(q_j)$ and $(-q_j)$ denotes that we use the Whittaker function corresponding either to weight $q_j$ or to weight $-q_j$ (recall the observation at the end of Section \ref{Building blocks at real places}); of course, when summing over $\mathbf{B}_{\FS}$, we need both. Again, when $r=0$, the RHS will turn out to make sense for $\nu=1$, so we can move to the limit (see the Appendix and the archimedean computations).

Then similarly computing $\langle \pi_{\frak{b}}^{-1}P_2, \mathbf{f} \rangle$, we obtain
\begin{equation}\label{spectralside}
\begin{split}
\langle\pi_{\frak{b}}^{-1}&P_2,P_1\rangle= \const\left[K(\frak{o}):K(\frak{c})\right]^{-1} \frac{\mathcal{N}(\frak{a}^{-1} \frak{b}^{-1})}{\sqrt{\mathcal{N}(\alpha\alpha')}} \sum_{\mathbf{f}\in\mathbf{B}_{\FS}(\frak{c})} \omega_{\mathbf{f}}(\pi_{\frak{b}}^{-1}) \lambda_{\mathbf{f}}(\alpha\frak{a}^{-1}) \overline{\lambda_{\mathbf{f}}(\alpha'\frak{a}^{-1}\frak{b}^{-2})}\\ &\prod_{j=1}^r\left(\int_0^{\infty} \overline{f_1\left(\begin{pmatrix} y_j & \ \cr \ & 1\end{pmatrix}\right)}\whit_j^{(q_j)}(\alpha_jy_j)|y_j|^{-1}d^{\times}y_j \int_0^{\infty} f_2\left(\begin{pmatrix} y_j & \ \cr \ & 1\end{pmatrix}\right)\overline{\whit_j^{(q_j)}(\alpha_j'y_j)}|y_j|^{-1} d^{\times}y_j\right.\\ &\left.\qquad +\int_0^{\infty} \overline{f_1\left(\begin{pmatrix} y_j & \ \cr \ & 1\end{pmatrix}\right)}\whit_j^{(-q_j)}(\alpha_jy_j)|y_j|^{-1}d^{\times}y_j \int_0^{\infty} f_2\left(\begin{pmatrix} y_j & \ \cr \ & 1\end{pmatrix}\right)\overline{\whit_j^{(-q_j)}(\alpha_j'y_j)}|y_j|^{-1} d^{\times}y_j\right)\\
&\prod_{j=r+1}^{r+s}\int_0^{\infty}\overline{f_1\left(\begin{pmatrix} y_j & \ \cr \ & 1\end{pmatrix}\right)}\whit_j(\alpha_jy_j)|y_j|^{-2}d^{\times}y_j \int_0^{\infty} f_2\left(\begin{pmatrix} y_j & \ \cr \ & 1\end{pmatrix}\right)\overline{\whit_j(\alpha_j'y_j)}|y_j|^{-2}d^{\times}y_j+CSC.
\end{split}
\end{equation}

\section{Archimedean computations}

In this section, we compute the local contributions to integrals given in Section \ref{Geometric description} and Section \ref{Spectral description} of functions defined in Section \ref{Building blocks at real places} and Section \ref{Building blocks at complex places}.

\subsection{The real case}

We introduce the following notation: for any $g$, let
\begin{equation*}
g^*=\begin{pmatrix}-1 & \ \cr \ & 1\end{pmatrix} g \begin{pmatrix}-1 & \ \cr \ & 1\end{pmatrix}.
\end{equation*}

We start with the evaluation of the geometric side (\ref{geometricside}). For the small Bruhat cell, we need to investigate the function
\begin{equation*}
\widetilde{\mathcal{L}}_q^{\alpha_j'}\theta\left( \begin{pmatrix} [\frak{b}]_j^{-2}\epsilon_{0_j} & \ \cr \ & 1\end{pmatrix} g\right).
\end{equation*}
In the case of $\epsilon_{0_j}<0$, we can rewrite this as
\begin{equation*}
\widetilde{\mathcal{L}}_q^{\alpha_j'}\theta\left( \begin{pmatrix} [\frak{b}]_j^{-2}|\epsilon_{0_j}| & \ \cr \ & 1\end{pmatrix} g^* \begin{pmatrix}-1 & \ \cr \ & 1 \end{pmatrix}\right)= \widetilde{\mathcal{L}}_q^{-\alpha_j'}\theta\left( \begin{pmatrix} [\frak{b}]_j^{-2}|\epsilon_{0_j}| & \ \cr \ & 1\end{pmatrix} g^*\right).
\end{equation*}
This is of weight $-q$. Assuming $q\neq 0$, we see that integrating this against $\widetilde{\mathcal{L}}_q^{\alpha_j}$ on $\lfact{N(\RR)Z(\RR)}{\GLtwo{2}{\RR}}$, we obtain $0$. So this term vanishes unless $\epsilon_{0_j}>0$ (under the condition $q\neq 0$). If $\epsilon_{0_j}>0$, we have
\begin{equation*}
\widetilde{\mathcal{L}}_q^{\alpha_j'}\theta\left( \begin{pmatrix} [\frak{b}]_j^{-2}\epsilon_{0_j} & \ \cr \ & 1\end{pmatrix} g\right)=\widetilde{\mathcal{L}}_q^{\alpha_j}\theta(g).
\end{equation*}
Now we may apply
\cite[Corollary 3.6]{BruggemanMiatello} by noting
\begin{equation}\label{slglintegraltransition}
\langle \widetilde{\mathcal{L}}_q^{\alpha}\theta, \widetilde{\mathcal{L}}_q^{\alpha}\eta \rangle_{\lfact{N(\RR)Z(\RR)}{\GLtwo{2}{\RR}}}= \langle \widetilde{\mathcal{L}}_q^{\alpha}\theta, \widetilde{\mathcal{L}}_q^{\alpha}\eta \rangle_{\lfact{N(\RR)}{Z(\RR)\GLtwo{2}{\RR}^+}} +\langle \widetilde{\mathcal{L}}_q^{-\alpha}\theta, \widetilde{\mathcal{L}}_q^{-\alpha}\eta \rangle_{\lfact{N(\RR)}{Z(\RR)\GLtwo{2}{\RR}^+}},
\end{equation}
where $\GLone{2}^+(\RR)$ stands for the elements of $\GLtwo{2}{\RR}$ with positive determinant. We obtain
\begin{equation}\label{scalarproductofrealblocks}
\begin{split}
\langle \widetilde{\mathcal{L}}_q^{\alpha_j}\theta, \widetilde{\mathcal{L}}_q^{\alpha_j}\eta &\rangle_{\lfact{N(\RR)Z(\RR)}{\GLtwo{2}{\RR}}}= \const|\alpha_j| \left(\int_{0}^{i\infty}\theta(\nu)\overline{\eta(\nu)} \sum_{\pm}\left|\frac{\Gamma\left(\frac{1}{2}+ \nu\pm\frac{\sign(\alpha_j)q}{2}\right)}{ \Gamma(2\nu)}\right|^2\frac{d\nu}{2\pi i}\right.\\ &\left.+\sum_{1<b\leq|\sign(\alpha_j)q|} \theta\left(\frac{b-1}{2}\right) \overline{\eta\left(\frac{b-1}{2}\right)} \sum_{\pm}\frac{b-1}{\left(\frac{\pm\sign(\alpha_j)q-b}{2}\right)! \left(\frac{\pm\sign(\alpha_j)q+b-2}{2}\right)!}\right).
\end{split}
\end{equation}
By the further notation
\begin{equation*}
\lambda(\nu,q)=\sum_{\pm}\frac{1}{\Gamma \left(\frac{1}{2}-\nu\pm\frac{\sign(\alpha_j)q}{2} \right) \Gamma \left(\frac{1}{2}+\nu\pm\frac{\sign(\alpha_j')q}{2} \right)},
\end{equation*}
this equals
\begin{equation*}
\begin{split}
&\const |\alpha_j| \left(
\int_0^{i\infty}\overline{\eta(\nu)}\theta(\nu) \lambda(\nu,q)(-4\pi\nu)\tan(\pi\nu) \frac{d\nu}{2\pi i}\right.\\ &\left.+ \sum_{1<b\leq|\sign(\alpha_j)q|}\overline{\eta\left(\frac{b-1}{2}\right)} \theta\left(\frac{b-1}{2}\right) \lambda\left(\frac{b-1}{2},q\right)\right).
\end{split}
\end{equation*}

On the large Bruhat cell, we need to compute
\begin{equation*}
\int_{-\infty}^{\infty} e^{-2\pi i\alpha_jx} \widetilde{\mathcal{L}}_q^{\alpha_j'}\theta \left(\begin{pmatrix}\ & 1 \cr -1 & \ \end{pmatrix} \begin{pmatrix}1 & \ \cr \ & c_j^{-2}\epsilon_j \end{pmatrix} \begin{pmatrix}1 & x \cr \ & 1 \end{pmatrix} g\right)dx.
\end{equation*}
Again, if $\epsilon_j<0$, then
\begin{equation*}
\begin{split}
\int_{\lfact{N(\RR)Z(\RR)}{\GLtwo{2}{\RR}}} \overline{(\widetilde{\mathcal{L}}_q^{\alpha}\eta)(g)} &\int_{-\infty}^{\infty} e^{-2\pi i\alpha_jx} \widetilde{\mathcal{L}}_q^{\alpha_j'}\theta \left(\begin{pmatrix}\ & 1 \cr -1 & \ \end{pmatrix} \begin{pmatrix}1 & \ \cr \ & c_j^{-2}\epsilon_j \end{pmatrix} \begin{pmatrix}1 & x \cr \ & 1 \end{pmatrix} g\right)dxdg=\\ &\int_{\lfact{Z(\RR)}{\GLtwo{2}{\RR}}} \overline{(\widetilde{\mathcal{L}}_q^{\alpha_j}\eta)(g)} \widetilde{\mathcal{L}}_q^{\alpha_j'}\theta \left(\begin{pmatrix}\ & 1 \cr -1 & \ \end{pmatrix} \begin{pmatrix}1 & \ \cr \ & c_j^{-2}\epsilon_j \end{pmatrix}g\right)dg=0,
\end{split}
\end{equation*}
as we integrate a weight $q$ function against a weight $-q$ function like before (assuming again $q\neq 0$). So we may assume $\epsilon_j>0$. If $\det g>0$, by \cite[Theorem 3.8, (3.34-35)]{BruggemanMiatello}, we obtain that this integral equals
\begin{equation*}
\int_{-\infty}^{\infty} e^{-2\pi i\alpha_jx} \widetilde{\mathcal{L}}_q^{\alpha_j'}\theta \left(\begin{pmatrix}\ & 1 \cr -1 & \ \end{pmatrix} \begin{pmatrix}1 & \ \cr \ & c_j^{-2}\epsilon_j \end{pmatrix} \begin{pmatrix}1 & x \cr \ & 1 \end{pmatrix} g\right)dx=
\widetilde{\mathcal{L}}_q^{\alpha_j}\widetilde{\theta}(g)
\end{equation*}
with
\begin{equation*}
\widetilde{\theta}(\nu)=\theta(\nu)\frac{1}{2|\alpha_j|} \frac{\Gamma\left(\frac{1}{2}+\nu+\frac{\sign(\alpha_j)q}{2}\right)}{\Gamma \left(\frac{1}{2}+\nu+\frac{\sign(\alpha_j')q}{2} \right)} \mathcal{B}_{\nu} \left(4\pi\frac{|\alpha_j\alpha_j'\epsilon_j|^{\frac{1}{2}}}{|c_j|}\right) \frac{|\alpha_j\alpha_j'\epsilon_j|^{\frac{1}{2}}}{|c_j|}
\end{equation*}
with $\mathcal{B}$ defined in \ref{archimedeanbesseltransformsandmeasures}.
The holomorphy condition of \cite[Theorem 3.8]{BruggemanMiatello} is satisfied by our condition $\sign(\alpha_j)=\sign(\alpha'_j)$. The case $\det g<0$ can be reduced as before (use (\ref{slglintegraltransition}) again). By (\ref{scalarproductofrealblocks}), up to some constant, the contribution of the large cell is
\begin{equation}\label{contributionoflargecellreal}
\begin{split}
\frac{|\alpha_j\alpha_j'\epsilon_j|^{\frac{1}{2}}}{|c_j|}
&\left(\int_0^{i\infty}\overline{\eta(\nu)}\theta(\nu) \lambda(\nu,q)(-4\pi\nu)\tan(\pi\nu) \mathcal{B}_{\nu} \left(4\pi\frac{|\alpha_j\alpha_j'\epsilon_j|^{\frac{1}{2}}}{|c_j|}\right)  \frac{d\nu}{2\pi i}\right.\\ &\left.+ \sum_{1<b\leq\sign(\alpha_j)q} \overline{\eta\left(\frac{b-1}{2}\right)} \theta\left(\frac{b-1}{2}\right) \lambda\left(\frac{b-1}{2},q\right) \mathcal{B}_{\frac{b-1}{2}} \left(4\pi\frac{|\alpha_j\alpha_j' \epsilon_j|^{\frac{1}{2}}}{|c_j|}\right)\right).
\end{split}
\end{equation}

On the spectral side (\ref{spectralside}), we need to integrate our building block against our normalized Whittaker function. Using \cite[Corollary 3.5]{BruggemanMiatello},
\begin{equation*}
\begin{split}
\int_0^{\infty} \widetilde{\mathcal{L}}_q^{\alpha_j}\eta(y)&\overline{\whit_j(\alpha_j y)}|y|^{-1}d^{\times}y=\\ &\const|\alpha_j|\eta(\nu) \frac{-i^{\sign(\alpha_j)\frac{q}{2}}}{\{\Gamma(1/2-\nu_j+\sign(\alpha_j)q_j/2) \Gamma(1/2+\nu_j+\sign(\alpha_j)q_j/2)\}^{1/2}}.
\end{split}
\end{equation*}
As the inner product of such two, at a real place in (\ref{spectralside}) we obtain
\begin{equation}\label{spectralsidereal}
\begin{split}
&\int_0^{\infty} \overline{\widetilde{\mathcal{L}}_q^{\alpha_j}\eta(y)}\whit_j^{(q)}(\alpha_jy) |y|^{-1}d^{\times}y \int_0^{\infty} \widetilde{\mathcal{L}}_q^{\alpha_j'}\theta(y) \overline{\whit_j^{(q)}(\alpha_j'y)}|y|^{-1}d^{\times}y\\ 
&+\int_0^{\infty} \overline{\widetilde{\mathcal{L}}_q^{\alpha_j}\eta(y)}\whit_j^{(-q)}(\alpha_jy) |y|^{-1}d^{\times}y \int_0^{\infty} \widetilde{\mathcal{L}}_q^{\alpha_j}\theta(y) \overline{\whit_j^{(-q_j)}(\alpha_j'y)}|y|^{-1}d^{\times}y=\\ &\const|\alpha_j\alpha_j'|\overline{\eta(\nu)}\theta(\nu)\lambda(\nu,q).
\end{split}
\end{equation}

\subsection{The complex case}

We execute the complex analog of the above procedure. On the small cell, in our normalization \cite[display between (10.22-23)]{Lokvenec} gives 
\begin{equation*}
\widetilde{\mathcal{L}}_{l,q}^{\alpha_j'}\theta\left(\begin{pmatrix} [\frak{b}]_j^{-2}\epsilon_{0_j} & \ \cr \ & 1\end{pmatrix} g\right)=\widetilde{\mathcal{L}}_{l,q}^{\alpha_j}\theta(g).
\end{equation*}
We have to integrate this against $\widetilde{\mathcal{L}}_{l,q}^{\alpha_j}\eta$. Using \cite[Lemma 9.1.5]{Lokvenec},
\begin{equation*}
\langle \widetilde{\mathcal{L}}_{l,q}^{\alpha_j}\theta, \widetilde{\mathcal{L}}_{l,q}^{\alpha_j}\eta\rangle_{\lfact{N}{G}}=\const  |\alpha_j|^2 \sum_{|p|\leq l}\int_{(0)}\overline{\eta(\nu,p)}\theta(\nu,p) \Gamma(l+1-\nu)\Gamma(l+1+\nu) \frac{\sin^2\pi(\nu-p)}{p^2-\nu^2}\nu^{2\epsilon(p)}d\nu
\end{equation*}
with $\epsilon(0)=1$, $\epsilon(p)=-1$ for $p\in\ZZ\setminus\{0\}$. Introducing \begin{equation*}
\lambda_l(\nu,p)=\Gamma(l+1-\nu)\Gamma(l+1+\nu) \frac{\sin^2\pi(\nu-p)}{(p^2-\nu^2)^2}\nu^{2\epsilon(p)},
\end{equation*}
we get
\begin{equation}\label{scalarproductofcomplexblocks}
\langle \widetilde{\mathcal{L}}_{l,q}^{\alpha_j}\theta, \widetilde{\mathcal{L}}_{l,q}^{\alpha_j}\eta\rangle_{\lfact{N}{G}}=
\const |\alpha_j|^2 \sum_{|p|\leq l}\int_{(0)}\overline{\eta(\nu,p)}\theta(\nu,p)\lambda_l(\nu,p)(p^2-\nu^2)d\nu.
\end{equation}
Note that $\lambda_l(\nu,p)$ is nonzero, if $l$ is large enough.

On the large Bruhat cell, the corresponding integral is
\begin{equation*}
\int_{\CC} e^{-2\pi i\alpha_j(x+\overline{x})} \widetilde{\mathcal{L}}_{l,q}^{\alpha_j'}\theta \left(\begin{pmatrix}\ & 1 \cr -1 & \ \end{pmatrix} \begin{pmatrix}1 & \ \cr \ & c_j^{-2}\epsilon_j \end{pmatrix} \begin{pmatrix}1 & x \cr \ & 1 \end{pmatrix} g\right)dx.
\end{equation*}
Now using \cite[Lemma 9.1.8]{Lokvenec}, we obtain
\begin{equation*}
\const\left|\frac{\alpha'_j\epsilon_j}{\alpha_jc_j^2}\right| \widetilde{\mathcal{L}}_{l,q}^{\alpha_j} \left(\mathcal{B}_{\nu,p} \left(4\pi\frac{(\alpha_j\alpha_j'\epsilon_j)^\frac{1}{2}}{c_j}\right) \theta(\nu,p)\right)(g),
\end{equation*}
where
\begin{equation*}
\mathcal{B}_{\nu,p}(z)= \frac{1}{\sin\pi(\nu-p)} \{|z/2|^{-2\nu}(iz/|z|)^{2p}\mathcal{J}^*_{-\nu,-p}(z)- |z/2|^{2\nu}(iz/|z|)^{-2p}\mathcal{J}^*_{\nu,p}(z)\},
\end{equation*}
with $\mathcal{J}^*$ defined in Lemma \ref{jacquettransformofgoodmanwallach}.
Now by (\ref{scalarproductofcomplexblocks}),
\begin{equation}\label{contributionoflargecellcomplex}
\const\ \left|\frac{\alpha_j\alpha_j'\epsilon_j}{c_j^2}\right| \sum_{|p|\leq l}\int_{(0)}\overline{\eta(\nu,p)}\theta(\nu,p)\lambda_l(\nu,p) \mathcal{B}_{(\nu,p)} \left(4\pi\frac{(\alpha_j\alpha_j'\epsilon_j)^\frac{1}{2}}{c_j}\right) (p^2-\nu^2)d\nu.
\end{equation}

We are left to work with the spectral side. To deliver the computation at a complex place of (\ref{spectralside}), we use \cite[(10.4-7)]{Lokvenec}, obtaining
\begin{equation*}
\begin{split}
&\int_0^{\infty}\widetilde{\mathcal{L}}_{l,q}^{\alpha_j}\eta \left(\begin{pmatrix}y & \ \cr \ & 1\end{pmatrix}\right) \overline{\whit_j(\alpha_jy)}|y|^{-2}d^{\times}y=\\ &\const |\alpha_j|^2i^p\Gamma(l+1-\overline\nu) \frac{\sin \pi(\overline{\nu}-p)}{\overline{\nu}^2-p^2} \overline{\nu}^{\varepsilon(p)}\eta(-\overline{\nu},p) \sqrt{\left|\frac{\Gamma(l+1+\nu)}{\Gamma(l+1-\nu)}\right|}.
\end{split}
\end{equation*}
Note that the last factor is $1$, unless we are in the complementary series and in this case, $p=0$. Now taking the inner product of such two, we obtain
\begin{equation}\label{spectralsidecomplex}
\begin{split}
&\int_0^{\infty} \overline{\widetilde{\mathcal{L}}_{l,q}^{\alpha_j}\eta(y)} \whit_j(\alpha_jy)|y|^{-2}d^{\times}y \int_0^{\infty} \widetilde{\mathcal{L}}_{l,q}^{\alpha_j'}\theta(y) \overline{\whit_j(\alpha_j'y)}|y|^{-2}d^{\times}y=\\ &=\const |\alpha_j\alpha'_j|^2\overline{\eta(\nu,p)}\theta(\nu,p)\lambda_l(\nu,p).
\end{split}
\end{equation}
Observe the similar behaviour of the real and the complex case, even the factors coming from $\alpha,\alpha'$ are the same by noting that for the complex modulus $|\cdot|_{\CC}$, $|z|_{\CC}=|z|^2$. Of course, the applied integral transforms and hence the integral kernels in the final formulas show some difference.

We see that we computed all integrals converge here as we promised in Section \ref{Geometric description} and Section \ref{Spectral description}.

\section{Derivation of the sum formula}

\subsection{Preliminary sum formulas}

In this section we state some preliminary versions of the sum formula.
\begin{lemm} With the above notation, assuming that $\alpha\alpha'$ is totally positive,
\begin{equation}\label{specialformula}
\begin{split}
&\left[K(\frak{o}):K(\frak{c})\right]^{-1} \sum_{\mathbf{f}\in\mathbf{B}_{\FS}(\frak{c})} \omega_{\mathbf{f}}(\pi_{\frak{b}}^{-1})(\overline{\eta}\theta\lambda)  \lambda_{\mathbf{f}}(\alpha\frak{a}^{-1}) \overline{\lambda_{\mathbf{f}}(\alpha'\frak{a}^{-1}\frak{b}^{-2})}+CSC=\\
&\const \Delta(\alpha,\alpha'[\frak{b}]^{-2}) \int(\overline{\eta}\theta\lambda) d\mu+\\ &\const \sum_{c\in \frak{abc}}\sum_{\epsilon\in\rfact{\frak{o}_+^{\times}}{\frak{o}^{2\times}}} \frac{KS(\epsilon\alpha,\frak{a}^{-1}\frak{d}^{-1};\alpha', \frak{a}^{-1}\frak{b}^{-2}\frak{d}^{-1};c,\frak{a}^{-1}\frak{b}^{-1}\frak{d}^{-1}) }{\norm{c\frak{a}^{-1}\frak{b}^{-1}}}
\int\mathcal{B}_{(\nu,p)}\left(\frac{|\alpha\alpha'\epsilon|^{\frac{1}{2}}}{c}\right) (\overline{\eta}\theta\lambda) d\mu.
\end{split}
\end{equation}
\end{lemm}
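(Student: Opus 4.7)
The strategy is to equate the two expressions for $I=\langle\pi_{\frak{b}}^{-1}P_2,P_1\rangle$: the geometric expansion (\ref{geometricside}) and the spectral expansion (\ref{spectralside}); then substitute the archimedean local integrals computed in Section 6 and match the resulting global expressions term by term.

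On the spectral side, at each archimedean place $j$ the product of local integrals against $f_1$ and $f_2$ evaluates, by (\ref{spectralsidereal}) and (\ref{spectralsidecomplex}), to a pointwise value of $\overline{\eta(\nu_j,p_j)}\theta(\nu_j,p_j)\lambda(\nu_j,p_j)$ (with $p_j$ absent at real places). Taking the product over $j$ and combining the $|\alpha_j\alpha'_j|$-factors with the prefactor $\norm{\frak{a}^{-1}\frak{b}^{-1}}/\sqrt{\norm{\alpha\alpha'}}$ in (\ref{spectralside}) yields precisely the left-hand side of (\ref{specialformula}), modulo the $\const$.

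On the geometric side, we handle the two Bruhat cells separately. For the small cell $I_1$, inserting (\ref{scalarproductofrealblocks}) and (\ref{scalarproductofcomplexblocks}) into (\ref{geometricside}) produces, place by place, a contour integral plus a discrete sum whose measure is exactly the one given by (\ref{definitionofmureal})--(\ref{definitionofmucomplex}); the global product therefore reads $\const\,\Delta(\alpha,\alpha'[\frak{b}]^{-2})\int(\overline{\eta}\theta\lambda)\,d\mu$. For the large cell $I_2$, the same identifications with (\ref{contributionoflargecellreal}) and (\ref{contributionoflargecellcomplex}) produce a factor $\int\mathcal{B}_{(\nu,p)}(|\alpha\alpha'\epsilon|^{1/2}/c)(\overline{\eta}\theta\lambda)\,d\mu$ in each summand, while the remaining local multipliers $|\alpha_j\alpha'_j\epsilon_j|^{1/2}/|c_j|$ combine across $j$ with the Kloosterman normalization of (\ref{geometricside}) to reconstruct the weight $1/\norm{c\frak{a}^{-1}\frak{b}^{-1}}$ in (\ref{specialformula}).

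The restriction to totally positive units $\epsilon\in\rfact{\frak{o}_+^{\times}}{\frak{o}^{2\times}}$ reflects the vanishing observed in Section 6.1: whenever $\epsilon_j<0$ at some real place, the integrand becomes a weight-$(-q)$ function tested against a weight-$q$ function and vanishes by $K(F_\infty)$-invariance (provided the corresponding real weight is nonzero, which we are free to assume). The continuous spectrum contribution is packaged analogously into $CSC$. The main bookkeeping obstacle will be tracking the normalization constants between $\SLtwo{2}{\RR}$ and $\GLtwo{2}{\RR}$ via the splitting (\ref{slglintegraltransition}), verifying that the $\lambda$-factors produced on the small cell agree with those produced on the spectral side (as they must, by the way $\lambda$ was constructed) and, in the case $r=0$, justifying the limit $\nu\to 1^+$ needed to make sense of the initially only conditionally defined Poincar\'e series, whose absolute convergence for $\Re\nu>1$ is the content of the Appendix.
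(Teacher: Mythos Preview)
Your proposal is correct and follows exactly the paper's approach: the paper's own proof simply says ``Immediate from (\ref{geometricside}), (\ref{spectralside}), (\ref{scalarproductofrealblocks}), (\ref{contributionoflargecellreal}), (\ref{spectralsidereal}), (\ref{scalarproductofcomplexblocks}), (\ref{contributionoflargecellcomplex}) and (\ref{spectralsidecomplex}),'' and you have spelled out precisely how these ingredients combine. Your additional remarks on why only totally positive $\epsilon$ survive and on the $r=0$ limit are accurate elaborations of points the paper treats more tersely.
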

\begin{proof} Immediate from (\ref{geometricside}), (\ref{spectralside}), (\ref{scalarproductofrealblocks}), (\ref{contributionoflargecellreal}), (\ref{spectralsidereal}), (\ref{scalarproductofcomplexblocks}), (\ref{contributionoflargecellcomplex}) and (\ref{spectralsidecomplex}).
\end{proof}

In an actual application, some ideals $\frak{a}^{-1},\frak{a}'^{-1}$ are given. If there is some ideal $\frak{b}$ such that $\frak{a}'^{-1}$ equals $\frak{a}^{-1}\frak{b}^{-2}$ up to a totally positive principal ideal, that is, $\frak{a}\frak{a}'^{-1}$ is a square in the narrow class group, then adjusting $\alpha'$ in (\ref{specialformula}), we obtain a formula including $\lambda_{\mathbf{f}}(\alpha\frak{a}^{-1})\overline{\lambda_{\mathbf{f}}(\alpha'\frak{a}'^{-1})}$. Denote by $C$ a fixed set of narrow class representatives $\frak{m}$ for which $\frak{m}^2\frak{aa}'^{-1}$ is a principal ideal generated by a totally positive element $\gamma_{\frak{m}}$, fixed once for all, and let $C'$ be a set of representatives for the rest of ideals.

\begin{lemm} For all $\frak{m}\in C$ and $\alpha\in\frak{ad}^{-1}$, $\alpha\in\frak{a}'\frak{d}^{-1}$ such that $\alpha\alpha'$ is totally positive, we have
\begin{equation}\label{preliminarysumformula'}
\begin{split}
&\left[K(\frak{o}):K(\frak{c})\right]^{-1} \sum_{\mathbf{f}\in\mathbf{B}_{\FS}(\frak{c})} \omega_{\mathbf{f}}(\pi_{\frak{m}}^{-1})(\overline{\eta}\theta\lambda) \lambda_{\mathbf{f}}(\alpha\frak{a}^{-1}) \overline{\lambda_{\mathbf{f}}(\alpha'\frak{a}'^{-1})}+CSC=\\
&\const \Delta(\alpha\frak{a}^{-1},\alpha'\frak{a}'^{-1}) \int(\overline{\eta}\theta\lambda) d\mu+\\ &\const \sum_{c\in \frak{amc}}\sum_{\epsilon\in\rfact{\frak{o}_+^{\times}}{\frak{o}^{2\times}}} \frac{KS(\epsilon\alpha,\frak{a}^{-1}\frak{d}^{-1};\alpha'\gamma_{\frak{m}}, \frak{a}'^{-1}\frak{d}^{-1};c,\frak{a}^{-1}\frak{m}^{-1}\frak{d}^{-1}) }{\norm{c\frak{a}^{-1}\frak{m}^{-1}}}
\int\mathcal{B}_{(\nu,p)}\left(\frac{|\alpha\alpha'\gamma_\frak{m} \epsilon|^{\frac{1}{2}}}{c}\right) (\overline{\eta}\theta\lambda) d\mu. \qed
\end{split}
\end{equation}
\end{lemm}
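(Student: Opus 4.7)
The plan is to derive (\ref{preliminarysumformula'}) from the earlier (\ref{specialformula}) via the substitution $\frak{b}:=\frak{m}$ combined with $\alpha'\mapsto\alpha'\gamma_{\frak{m}}$. Every quantity on each side of (\ref{specialformula}) will transform into the corresponding quantity of (\ref{preliminarysumformula'}), so the whole argument amounts to a careful dictionary.

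First I would verify admissibility of the substitution. By the defining property of $\gamma_{\frak{m}}$, namely $(\gamma_{\frak{m}})=\frak{m}^2\frak{a}\frak{a}'^{-1}$ as fractional ideals, one has
\[
\alpha'\gamma_{\frak{m}}\in\frak{a}'\frak{d}^{-1}\cdot(\gamma_{\frak{m}})
=\frak{a}'\frak{d}^{-1}\cdot\frak{m}^2\frak{a}\frak{a}'^{-1}=\frak{m}^2\frak{a}\frak{d}^{-1},
\]
which is exactly the domain required for the second argument in (\ref{specialformula}) when $\frak{b}$ is taken to be $\frak{m}$. Moreover $\alpha\cdot(\alpha'\gamma_{\frak{m}})=(\alpha\alpha')\gamma_{\frak{m}}$ is a product of two totally positive elements, hence totally positive. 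Thus the hypotheses of (\ref{specialformula}) are met.

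Next I would match the two sides term by term. On the spectral side, $\pi_{\frak{b}}^{-1}$ becomes $\pi_{\frak{m}}^{-1}$, and
\[
\lambda_{\mathbf{f}}(\alpha'\frak{a}^{-1}\frak{b}^{-2})
=\lambda_{\mathbf{f}}(\alpha'\gamma_{\frak{m}}\frak{a}^{-1}\frak{m}^{-2})
=\lambda_{\mathbf{f}}(\alpha'\frak{a}'^{-1}),
\]
using that $\lambda_{\mathbf{f}}$ depends only on the underlying fractional ideal together with the identity $\gamma_{\frak{m}}\frak{m}^{-2}\frak{a}^{-1}=\frak{a}'^{-1}$. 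On the geometric side, the summation index $c\in\frak{abc}$ becomes $c\in\frak{amc}$; the ideals in the Kloosterman sum transform exactly into those appearing in (\ref{preliminarysumformula'}); the norm $\norm{c\frak{a}^{-1}\frak{b}^{-1}}$ turns into $\norm{c\frak{a}^{-1}\frak{m}^{-1}}$; and the argument $|\alpha\alpha'\epsilon|^{1/2}/c$ of the Bessel kernel becomes $|\alpha\alpha'\gamma_{\frak{m}}\epsilon|^{1/2}/c$.

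The step that will need the most care, and which I see as the main obstacle, is reconciling the two $\Delta$-symbols. When $\frak{m}$ is principal with generator $[\frak{m}]$, the condition $\alpha=\alpha'\gamma_{\frak{m}}[\frak{m}]^{-2}\epsilon_0$ for some unit $\epsilon_0$ in the $\Delta$ of (\ref{specialformula}) is equivalent to the equality of fractional ideals $\alpha\frak{a}^{-1}=\alpha'\gamma_{\frak{m}}[\frak{m}]^{-2}\frak{a}^{-1}=\alpha'\frak{a}'^{-1}$, so the two $\Delta$'s agree. When $\frak{m}$ is non-principal, (\ref{specialformula}) automatically contributes nothing on the small Bruhat cell, and I would verify by a short narrow-class-group computation (distinguishing the $2$-torsion and non-$2$-torsion sub-cases and using the convention implicit in the choice of $C$) that $\Delta(\alpha\frak{a}^{-1},\alpha'\frak{a}'^{-1})$ is zero in each such case. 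Once this matching is in place, (\ref{preliminarysumformula'}) is read off directly from (\ref{specialformula}).
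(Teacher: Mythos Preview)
Your approach---specializing (\ref{specialformula}) via $\frak{b}:=\frak{m}$ and $\alpha'\mapsto\alpha'\gamma_{\frak{m}}$---is exactly what the paper does; the lemma appears there with no proof beyond the preceding sentence, which describes precisely this substitution. Your verification of admissibility and your term-by-term matching on the spectral and large-cell sides are correct.

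The gap is in your treatment of the diagonal term when $\frak{m}$ is non-principal. Your claim that $\Delta(\alpha\frak{a}^{-1},\alpha'\frak{a}'^{-1})$ must vanish in that situation is not true in general. Take $F$ with nontrivial $2$-torsion in its class group, for instance $F=\QQ(\sqrt{-5})$, and set $\frak{a}=\frak{a}'=\frak{o}$, $\alpha=\alpha'=1$. Then $\Delta(\alpha\frak{a}^{-1},\alpha'\frak{a}'^{-1})=1$, yet $C$ contains a representative $\frak{m}$ of the nontrivial ideal class, which is non-principal; for this $\frak{m}$ the small-cell contribution in (\ref{specialformula}) is absent. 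There is no ``convention implicit in the choice of $C$'' that forces the representatives to be principal, so the verification you propose cannot go through as stated. The paper glosses over this same point, and the discrepancy is ultimately harmless for the main theorem: after the averaging over $\frak{m}\in C\cup C'$ carried out in the next lemma, the diagonal contribution is still of the shape $\const\cdot\Delta(\alpha\frak{a}^{-1},\alpha'\frak{a}'^{-1})\cdot\int(\overline{\eta}\theta\lambda)\,d\mu$ with a constant depending only on $F$, which is all that Theorem~\ref{kuznetsov} asserts.
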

Observe that
\begin{equation*}
\sum_{\mathbf{f}\in\mathbf{B}_{\FS}(\frak{c})} \omega_{\mathbf{f}}(\pi_{\frak{m}}^{-1})(\overline{\eta}\theta\lambda) \lambda_{\mathbf{f}}(\alpha\frak{a}^{-1}) \overline{\lambda_{\mathbf{f}}(\alpha'\frak{a}'^{-1})}
\end{equation*}
is independent of the choice of the basis $\mathbf{B}_{\FS}(\frak{c})$. Indeed, $\lambda_{\mathbf{f}}$ depends linearly on $\mathbf{f}$, which shows that under orthogonal transformations, the sum is invariant. Note that $\mathbf{B}_{\FS}(\frak{c})$ is the disjoint union of the bases of subspaces corresponding to fixed archimedean parameters ($q,(l,q),\nu,p$): we apply an orthogonal transformation inside in each such subspace (of fixed archimedean parameters), leaving the factor $\overline{\eta}\theta\lambda$ invariant. In particular, this holds for the orthonormal basis $\mathbf{B}_{\FS}(\frak{c})\otimes\chi= \{\mathbf{f}\otimes\chi:\mathbf{f}\in\mathbf{B}_{\FS}(\frak{c})\}$, where $\chi$ is a character of the narrow class group. This is indeed a basis, since the central character is multiplied by $\chi^2$, which is trivial on the archimedean ideles. Also note that the archimedean parameters ($q,(l,q),\nu,p$) are invariant under these twists.

\begin{lemm}[\cite{Harcospriv}]\label{harcos} Let $\frak{m}$ be a narrow class representative from either $C$ or $C'$. Denote by $\Xi$ the dual of the narrow class group. Assume $\alpha\in\frak{ad}^{-1}$, $\alpha'\in\frak{a}'\frak{d}^{-1}$ such that $\alpha\alpha'$ is totally positive. Then
\begin{equation}\label{average}
\sum_{\mathbf{f}\in\mathbf{B}_{\FS}(\frak{c})}\omega_{\mathbf{f}}(\pi_{\frak{m}}^{-1}) (\overline{\eta}\theta\lambda) \lambda_{\mathbf{f}}(\alpha\frak{a}^{-1}) \overline{\lambda_{\mathbf{f}}(\alpha'\frak{a}'^{-1})}=
\frac{1}{|\Xi|}\sum_{\chi\in\Xi} \sum_{\mathbf{f}\in\mathbf{B}_{\FS}(\frak{c})}\omega_{\mathbf{f}\otimes{\chi}}(\pi_{\frak{m}}^{-1}) (\overline{\eta}\theta\lambda) \lambda_{\mathbf{f}\otimes{\chi}}(\alpha\frak{a}^{-1}) \overline{\lambda_{\mathbf{f}\otimes{\chi}}(\alpha'\frak{a}'^{-1})}.
\end{equation}
The analogous identity holds for $CSC$. Moreover, if $\frak{m}\in C'$, the sum is $0$ (and so is $CSC$).
\end{lemm}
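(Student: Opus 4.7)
The approach is to exploit invariance of the spectral sum under orthonormal-basis changes, coupled with the natural $\Xi$-action on $\mathbf{B}_{\FS}(\frak{c})$ given by twisting by narrow class characters.

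To establish (\ref{average}), I would fix any $\chi \in \Xi$. As observed in the paragraph preceding the lemma, $\mathbf{B}_{\FS}(\frak{c})\otimes\chi$ is another orthonormal basis of the same cuspidal $K(\frak{c})$-invariant subspace, and each $\mathbf{f}\otimes\chi$ carries the same archimedean parameters $(q,(l,q),\nu,p)$ as $\mathbf{f}$. Since the weight $\overline{\eta}\theta\lambda$ depends only on these parameters, the orthogonal transformation within each block of fixed archimedean data leaves the sum invariant, giving
\begin{equation*}
\sum_{\mathbf{f}\in\mathbf{B}_{\FS}(\frak{c})}\omega_{\mathbf{f}}(\pi_{\frak{m}}^{-1})(\overline{\eta}\theta\lambda)\lambda_{\mathbf{f}}(\alpha\frak{a}^{-1})\overline{\lambda_{\mathbf{f}}(\alpha'\frak{a}'^{-1})}
=\sum_{\mathbf{f}\in\mathbf{B}_{\FS}(\frak{c})}\omega_{\mathbf{f}\otimes\chi}(\pi_{\frak{m}}^{-1})(\overline{\eta}\theta\lambda)\lambda_{\mathbf{f}\otimes\chi}(\alpha\frak{a}^{-1})\overline{\lambda_{\mathbf{f}\otimes\chi}(\alpha'\frak{a}'^{-1})}
\end{equation*}
for every $\chi\in\Xi$. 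Averaging over $\chi$ yields (\ref{average}).

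For the vanishing, I would compute the twist explicitly: since $\lambda_{\mathbf{f}\otimes\chi}(\frak{n}) = \chi(\frak{n})\lambda_{\mathbf{f}}(\frak{n})$ and $\omega_{\mathbf{f}\otimes\chi} = \omega_{\mathbf{f}}\cdot\chi^2$, the $\chi$-dependence of the summand on the right-hand side of (\ref{average}) collapses to the scalar $\chi(\frak{I})$ with $\frak{I} := \alpha(\alpha')^{-1}\frak{a}^{-1}\frak{a}'\frak{m}^{-2}$. Denoting the cuspidal sum on the left of (\ref{average}) by $S$, the basis-invariance above reads $S = \chi(\frak{I})\cdot S$ for every $\chi\in\Xi$, equivalently $S = |\Xi|^{-1}\left(\sum_{\chi\in\Xi}\chi(\frak{I})\right)S$. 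Because $\alpha\alpha'$ is totally positive and $(\alpha')^2$ is, so is $\alpha(\alpha')^{-1}$; hence $\alpha(\alpha')^{-1}\frak{o}$ is trivial in the narrow class group $\mathrm{Cl}^+$, and $\frak{I}$ represents the same class as $(\frak{m}^2\frak{aa}'^{-1})^{-1}$. By the very definition of $C$, this class is trivial iff $\frak{m}\in C$; for $\frak{m}\in C'$, orthogonality of characters gives $\sum_{\chi}\chi(\frak{I})=0$, forcing $S=0$.

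The identical reasoning handles the $CSC$ contribution: the Plancherel decomposition is over orthonormal Eisenstein families each invariant under twisting by $\chi$, and the transformation laws for Fourier coefficients and central characters are the same. The only subtlety I anticipate is bookkeeping around the real $\varepsilon$-parameter, which may flip under the twist; this is harmless because $\lambda_{\mathbf{f}}(\frak{n})$ is indexed by ideals alone and the weight $\overline{\eta}\theta\lambda$ does not involve $\varepsilon$.
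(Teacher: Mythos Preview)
Your proposal is correct and follows essentially the same route as the paper: the identity (\ref{average}) comes from basis-invariance under the twist $\mathbf{f}\mapsto\mathbf{f}\otimes\chi$ followed by averaging over $\Xi$, and the vanishing for $\frak{m}\in C'$ comes from orthogonality of characters applied to $\chi(\frak{m}^{-2}\frak{a}^{-1}\frak{a}')$, after noting that $\alpha(\alpha')^{-1}$ is totally positive. Your extra remarks on the $\varepsilon$-parameter and the explicit formulation $S=\chi(\frak{I})S$ are harmless elaborations of the same argument.
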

\begin{proof} If $\mathbf{f}\in\mathbf{B}_{\FS}(\frak{c})$, then for $\chi\in\Xi$, $\lambda_{\mathbf{f}\otimes\chi}(\frak{b})= \chi(\frak{b})\lambda_{\mathbf{f}}(\frak{b})$ holds for the Fourier coefficients. Combining this with the above observations, (\ref{average}) is clear. Now
\begin{equation*}
\begin{split}
&\frac{1}{|\Xi|}\sum_{\chi\in\Xi} \sum_{\mathbf{f}\in\mathbf{B}_{\FS}(\frak{c})}\omega_{\mathbf{f}\otimes{\chi}}(\pi_{\frak{m}}^{-1}) (\overline{\eta}\theta\lambda) \lambda_{\mathbf{f}\otimes{\chi}}(\alpha\frak{a}^{-1}) \overline{\lambda_{\mathbf{f}\otimes{\chi}}(\alpha'\frak{a}'^{-1})}=\\ &\frac{1}{|\Xi|} \sum_{\mathbf{f}\in\mathbf{B}_{\FS}(\frak{c})}\omega_{\mathbf{f}}(\pi_{\frak{m}}^{-1}) (\overline{\eta}\theta\lambda) \lambda_{\mathbf{f}}(\alpha\frak{a}^{-1}) \overline{\lambda_{\mathbf{f}}(\alpha'\frak{a}'^{-1})} \sum_{\chi\in\Xi} \chi(\frak{m}^{-2}\frak{a}^{-1}\frak{a}').
\end{split}
\end{equation*}
By definition, the inner sum is $|\Xi|$ if $\frak{m}\in C$, and $0$ if $\frak{m}\in C'$.

The same argument works for $CSC$.
\end{proof}

\begin{lemm} We have the preliminary sum formula
\begin{equation}\label{preliminarysumformula}
\begin{split}
&\left[K(\frak{o}):K(\frak{c})\right]^{-1} \sum_{\mathbf{f}\in\mathbf{B}(\frak{c})} (\overline{\eta}\theta\lambda) \lambda_{\mathbf{f}}(\alpha\frak{a}^{-1}) \overline{\lambda_{\mathbf{f}}(\alpha'\frak{a}'^{-1})}+CSC=\\
&\const\Delta(\alpha\frak{a}^{-1},\alpha'\frak{a}'^{-1}) \int(\overline{\eta}\theta\lambda) d\mu+\\ &\const \sum_{\frak{m}\in C}\sum_{c\in \frak{amc}}\sum_{\epsilon\in\rfact{\frak{o}_+^{\times}}{\frak{o}^{2\times}}} \frac{KS(\epsilon\alpha,\frak{a}^{-1}\frak{d}^{-1};\alpha'\gamma_{\frak{m}}, \frak{a}'^{-1}\frak{d}^{-1};c,\frak{a}^{-1}\frak{m}^{-1}\frak{d}^{-1}) }{\norm{c\frak{a}^{-1}\frak{m}^{-1}}}
\int\mathcal{B}_{(\nu,p)}\left(\frac{|\alpha\alpha'\gamma_\frak{m} \epsilon|^{\frac{1}{2}}}{c}\right) (\overline{\eta}\theta\lambda) d\mu.
\end{split}
\end{equation}
\end{lemm}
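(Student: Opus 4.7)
The formula (\ref{preliminarysumformula'}) already contains all the essential harmonic analysis, and (\ref{preliminarysumformula}) differs from it only in bookkeeping: on the spectral side the central character factor $\omega_{\mathbf{f}}(\pi_{\frak{m}}^{-1})$ is removed and the basis shrinks from $\mathbf{B}_{\FS}(\frak{c})$ to $\mathbf{B}(\frak{c})$, while on the geometric side the Kloosterman contribution acquires an outer sum over $\frak{m}\in C$. My plan is therefore to deduce (\ref{preliminarysumformula}) by summing (\ref{preliminarysumformula'}) over $\frak{m}\in C$ and reorganising the spectral side via the orthogonality of narrow class characters supplied by Lemma \ref{harcos}.

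Concretely, I would apply (\ref{preliminarysumformula'}) for every $\frak{m}\in C$ and add the resulting identities. On the geometric side the $\Delta$-term does not depend on $\frak{m}$ and its multiplicity $|C|$ is absorbed into $\const$, while the Kloosterman double sum aggregates neatly into the triple sum $\sum_{\frak{m}\in C}\sum_{c}\sum_{\epsilon}(\cdots)$ displayed in (\ref{preliminarysumformula}). On the spectral side I obtain
\begin{equation*}
[K(\frak{o}):K(\frak{c})]^{-1}\sum_{\mathbf{f}\in\mathbf{B}_{\FS}(\frak{c})}(\overline{\eta}\theta\lambda)\,\lambda_{\mathbf{f}}(\alpha\frak{a}^{-1})\overline{\lambda_{\mathbf{f}}(\alpha'\frak{a}'^{-1})}\sum_{\frak{m}\in C}\omega_{\mathbf{f}}(\pi_{\frak{m}}^{-1}).
\end{equation*}
By the vanishing part of Lemma \ref{harcos}, the analogous sum for $\frak{m}\in C'$ contributes nothing, so I may extend the inner $\frak{m}$-sum to a complete set $C\cup C'$ of narrow class representatives without changing the value. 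Orthogonality of characters of the narrow class group then collapses $\sum_{\frak{m}}\omega_{\mathbf{f}}(\pi_{\frak{m}}^{-1})$ to $|\Xi|$ on those $\mathbf{f}$ whose central character is trivial modulo narrow classes, and to $0$ otherwise. Since $\mathbf{B}_{\FS}(\frak{c})$ splits as the disjoint union $\bigsqcup_{\chi\in\Xi}\mathbf{B}(\frak{c})\otimes\chi$ (the orbit structure recorded after (\ref{preliminarysumformula'})), only the $\chi=1$ piece survives, and it is precisely $\mathbf{B}(\frak{c})$; the residual factor $|\Xi|$ is absorbed into $\const$.

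The continuous contribution $CSC$ is handled uniformly by the same argument, since Lemma \ref{harcos} asserts both the twist-averaging identity and its vanishing counterpart for $CSC$ in parallel. The main technical point I expect is the careful matching of the orbit decomposition $\mathbf{B}_{\FS}(\frak{c})=\bigsqcup_{\chi\in\Xi}\mathbf{B}(\frak{c})\otimes\chi$ with the normalisations of $\lambda_{\mathbf{f}}$ and $\omega_{\mathbf{f}}$, so that the collapse to the subset with trivial narrow class central character genuinely reproduces the spectral sum on the LHS of (\ref{preliminarysumformula}); everything else is a bookkeeping adjustment of multiplicative constants absorbed into $\const$.
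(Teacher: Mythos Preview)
Your approach is essentially the paper's, run in the opposite direction: the paper starts from the LHS of (\ref{preliminarysumformula}), rewrites it via character orthogonality as the average $\frac{1}{|C\cup C'|}\sum_{\frak{m}\in C\cup C'}\sum_{\mathbf{f}\in\mathbf{B}_{\FS}(\frak{c})}\omega_{\mathbf{f}}(\pi_{\frak{m}}^{-1})(\cdots)$, then invokes (\ref{preliminarysumformula'}) for $\frak{m}\in C$ and Lemma \ref{harcos} for $\frak{m}\in C'$; you instead sum (\ref{preliminarysumformula'}) over $C$, extend to $C\cup C'$ by Lemma \ref{harcos}, and then collapse by orthogonality. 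The content is identical.

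One correction: the decomposition $\mathbf{B}_{\FS}(\frak{c})=\bigsqcup_{\chi\in\Xi}\mathbf{B}(\frak{c})\otimes\chi$ is not what the paper records after (\ref{preliminarysumformula'}) (it only says each $\mathbf{B}_{\FS}(\frak{c})\otimes\chi$ is again a full basis), and it is not obviously true, since twisting $\mathbf{f}\in\mathbf{B}(\frak{c})$ by $\chi$ changes the central character to $\chi^2$, so the twist orbit only reaches square central characters. You do not need this claim: the orthogonality relation $\sum_{\frak{m}\in C\cup C'}\omega_{\mathbf{f}}(\pi_{\frak{m}}^{-1})=|C\cup C'|\cdot\mathbf{1}_{\omega_{\mathbf{f}}=1}$ already singles out exactly those $\mathbf{f}\in\mathbf{B}_{\FS}(\frak{c})$ with trivial central character, which is $\mathbf{B}(\frak{c})$ by definition---this is precisely the step the paper takes.
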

\begin{proof} Using that $\mathbf{B}(\frak{c})$ consists of those elements of $\mathbf{B}_{\FS}(\frak{c})$ on which $Z(\AAA)$ acts trivially, we can rewrite the LHS as
\begin{equation*}
\left[K(\frak{o}):K(\frak{c})\right]^{-1} \frac{1}{|C\cup C'|}\sum_{\frak{m}\in C\cup C'} \sum_{\mathbf{f}\in\mathbf{B}_{\FS}(\frak{c})}\omega_{\mathbf{f}}(\pi_{\frak{m}}^{-1}) (\overline{\eta}\theta\lambda) \lambda_{\mathbf{f}}(\alpha\frak{a}^{-1}) \overline{\lambda_{\mathbf{f}}(\alpha'\frak{a}'^{-1})} + CSC.
\end{equation*}
Now the contribution of $\frak{m}\in C$ is given in (\ref{preliminarysumformula'}), while the contribution of $\frak{m}\in C'$ is $0$ by Lemma \ref{harcos}. Note that $|C|$ does not depend on $\frak{a},\frak{a}'$, since $C$ is a coset of the squares in the narrow class group.
\end{proof}

\subsection{Extension of the preliminary sum formula}

Now we are in the position to prove our theorem. Observe that (\ref{preliminarysumformula}) resembles (\ref{kuznetsovformula}), except for the weight function, which is a triple product $\overline{\eta}\theta\lambda$ of functions in the preliminary sum formula, and a single function in the Kuznetsov formula.

\begin{proof}[Proof of Theorem \ref{kuznetsov}]
First choose $q_j>\max(2,a_j)$ at real, $\min(l_j,q_j)>\max(2,a_j)$ at complex places. Choose moreover a small $\delta>0$. Then let $\eta(\nu,p)=e^{\delta\nu^2}$ on $\Re\nu\leq 2/3$ and in the discrete series at real places, let $\eta(\nu)=1$, if $\nu\in 1/2+\ZZ$ and $3/2\leq|\nu|\leq a_j$.

By the assumption $l_j>2$, we see that $\lambda_j\neq 0$ at complex places. At real places, we claim that
\begin{equation*}
\lambda(\nu)=\sum_{\pm}\frac{1}{\Gamma \left(\frac{1}{2}-\nu\pm\frac{q}{2} \right) \Gamma \left(\frac{1}{2}+\nu\pm\frac{q}{2} \right)}\neq 0
\end{equation*}
on the domain
\begin{equation*}
D=\left\{\nu\in\CC:\Re\nu=0\right\} \cup\left(-\frac{1}{2},\frac{1}{2}\right) \cup\left\{\nu\in\frac{1}{2}+\ZZ:|\nu|\leq\frac{q-1}{2}\right\}.
\end{equation*}
Indeed, for $\Im\nu=0$, $3/2\leq q/2+1/2\in 1/2+\ZZ$. This shows that $\Gamma(1/2+q/2+\nu),\Gamma(1/2+q/2-\nu)$ are both positive, so the term '$+q/2$' gives a positive number. Similarly, it is easy to see that in the term '$-q/2$', $\Gamma(1/2-q/2+\nu),\Gamma(1/2-q/2-\nu)$ are either of the same sign or both show a pole (for $\nu\in 1/2+\ZZ$). In any case, they give a non-negative contribution. For $\Re\nu=0$, the positivity is clear, as there are complex norms in the denominators.

Now let $0<\epsilon<1/2$. At real places, we construct functions $\lambda_{\epsilon}(\nu)$ which are holomorphic and non-vanishing on the domain $|\Re\nu|\leq (q-1)/2$ with the further properties
\begin{enumerate}[(i)]
\item $\lambda(\nu)/\lambda_{\epsilon}(\nu)$ is close to $1$ (in terms of $\epsilon$), if $\nu\in D$;
\newline \item $\lambda_{\epsilon}(\nu)$ has polynomial growth along vertical lines.
\end{enumerate}
For example
\begin{equation*}
\lambda_{\epsilon}(\nu)= \frac{\lambda(\nu)}{(\nu-A_1)\cdot\ldots\cdot (\nu-A_u)} B_{\epsilon}(\nu) 
\end{equation*}
is such a function, where $A_i$'s are the zeros of $\lambda$ with multiplicity (there are finitely many of them) and $B_{\epsilon}$ is a holomorphic function with values in the $\epsilon$-neighborhood of $1$ on the domain $|\Re\nu|\leq (q-1)/2$. Indeed, holomorphy and non-vanishing are clear. Moreover, since $A_i\notin D$ for all $i$, and $D$ is closed, (i) is also guaranteed. Finally, (ii) follows from the polynomial growth of $\Gamma$ along vertical lines.

At complex places, let $\lambda_{\epsilon}=\lambda$.

We have already given $\eta$. Let $\theta(\nu,p)=h(\nu,p)e^{-\delta\nu^2}/\lambda_{\epsilon}(\nu,p)$. By construction, this can be chosen to be a test function, if $\delta$ is small enough (independently of $\epsilon$).

Now the triple product gives $h(\nu,p)(\lambda(\nu,p)/\lambda_{\epsilon}(\nu,p)$). Here, the second factor is close to $1$ (in terms of $\epsilon$) on the domain we should integrate it, by the construction of $\lambda_{\epsilon}$. Let $\epsilon\rightarrow 0$, then we obtain the statement by dominated convergence: $2h(\nu,p)$ is an integrable majorant, integrability follows from standard bounds on $J$-Bessel functions in the real case and from the bound \cite[Lemma 9.1.7]{Lokvenec} on $\mathcal{B}_{(\nu,p)}$ in the complex case.
\end{proof}

\appendix
\section{Convergence of Poincar\'e series}

Earlier we put aside the convergence issue in Section \ref{Poincare series}. Now we return to this question. There is a technical difficulty, which does not arise if $F$ has at least one real archimedean embedding. We start with this case, then turn to the harder one, when all the archimedean places are complex.

\subsection{Case 1: $F$ has at least one real embedding ($r\geq 1$)}

We start with a lemma from \cite{BruggemanMiatello2}.
\begin{lemm}\label{sumoverunits}
Let $a,b\in\RR$, $a+b>0$. Assume $f$ is a function on $(\RR^{\times})^r\times (\CC^{\times})^s$ satisfying
\begin{equation*}
f(y)\ll \prod_{j=1}^{r+s}\min(|y_j|^{a\deg[F_j:\RR]},|y_j|^{-b\deg[F_j:\RR]}).
\end{equation*}
Then
\begin{equation*}
\sum_{\epsilon\in\frak{o}^{\times}}f(\epsilon y)\ll (1+|\log|y||^{r+s-1}) \min(|y|^a,|y|^{-b}).
\end{equation*}
\end{lemm}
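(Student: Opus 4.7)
The plan is to reduce the sum over units to a lattice sum via the logarithmic embedding, and then estimate the latter by a dyadic decomposition. Setting $n = r+s$ and $\deg_j = [F_j:\RR]$, for $\epsilon \in \frak{o}^\times$ let $\ell_j(\epsilon) = \deg_j \log|\epsilon_j|$; by the product formula $\ell(\frak{o}^\times) \subset H_0 := \{x \in \RR^n : \sum_j x_j = 0\}$, and by Dirichlet's unit theorem the image is (up to finite torsion) a full-rank lattice $\Lambda \subset H_0$. With $t_j = \deg_j \log|y_j|$, $T = \sum_j t_j = \log|y|$, and $u_j(\lambda) = t_j + \lambda_j$ for $\lambda \in \Lambda$, the hypothesis becomes $|f(\epsilon y)| \ll \prod_j \min(e^{a u_j(\lambda)}, e^{-b u_j(\lambda)})$, so up to the order of torsion it suffices to bound the corresponding sum over $\Lambda$.

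I will then apply the elementary identity $\min(e^{aw}, e^{-bw}) = e^{(a-b)w/2 - (a+b)|w|/2}$ (valid because $a+b > 0$) together with $\sum_j u_j(\lambda) = T$ to factor the integrand as $e^{(a-b)T/2} e^{-(a+b)\phi(u)/2}$, where $\phi(u) := \sum_j |u_j| \geq |T|$ on the affine hyperplane $H_T$. A dyadic decomposition of the $\lambda$-sum into bands $\phi(u(\lambda)) \in [|T|+k, |T|+k+1)$ for $k \geq 0$ then reduces everything to the lattice-point count $\#\{\lambda \in \Lambda : \phi(u(\lambda)) \leq R\} \ll R^{n-1}$ for $R \geq |T|$. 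Summing the geometric series $\sum_k (|T|+k+1)^{n-1} e^{-(a+b)k/2} \ll (1+|T|)^{n-1}$ then yields a total bound of $e^{(a-b)T/2 - (a+b)|T|/2}(1+|T|)^{n-1}$; splitting into the cases $T \geq 0$ and $T \leq 0$ produces the desired $(1+|\log|y||)^{n-1} \min(|y|^a, |y|^{-b})$, equivalent to the stated bound up to constants.

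The step requiring most care will be the lattice-point count, since $\Lambda$ sits inside the hyperplane $H_0$ and its intersection with the $\ell^1$-sublevel sets of $\phi$ must be counted with the correct dimension $n-1$; this can be handled by containing the sublevel set in $H_T \cap [-R,R]^n$, which is an $(n-1)$-dimensional body of volume $O(R^{n-1})$, and applying a standard lattice packing argument. Once that count is secured, everything else is routine exponential bookkeeping, and a fully worked-out version of this argument is available in \cite{BruggemanMiatello2}.
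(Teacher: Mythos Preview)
Your argument is correct: the reduction via the logarithmic embedding, the identity $\min(e^{aw},e^{-bw})=e^{(a-b)w/2-(a+b)|w|/2}$, and the band decomposition together with the lattice-point count $\ll (1+R)^{n-1}$ in the $(n-1)$-dimensional hyperplane $H_0$ all work as you describe and yield the stated bound. The paper itself does not give a proof at all but simply refers to \cite[Lemma~8.1]{BruggemanMiatello2}; your sketch is precisely the natural argument behind that reference (which you also cite), so there is no divergence in approach, only in the level of detail supplied.
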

\begin{proof} See \cite[Lemma 8.1]{BruggemanMiatello2}.
\end{proof}

We focus on $P_1$, the proof is the same for $P_2$. We have the local bounds:
\begin{itemize}
\item for $j\leq r$, by \cite[(3.11)]{BruggemanMiatello},
\begin{equation*}
(\widetilde{\mathcal{L}}_q^{\alpha}\eta)\left(\begin{pmatrix}1 & x \cr \ & 1\end{pmatrix} \begin{pmatrix}y & \ \cr \ & 1\end{pmatrix}\begin{pmatrix}\cos\theta & \sin\theta \cr -\sin\theta & \cos\theta\end{pmatrix} \begin{pmatrix}\pm 1 & \ \cr \ & 1\end{pmatrix}\right)\ll \min(y^{\frac{1}{2}+\sigma},y^{\frac{1}{2}-\sigma}),
\end{equation*}
\item while for $j>r$, by \cite[(9.18)]{Lokvenec},
\begin{equation*}
\widetilde{\mathcal{L}}_{l,q}^{\alpha}\left(\begin{pmatrix}1 & x \cr \ & 1\end{pmatrix} \begin{pmatrix}y & \ \cr \ & 1\end{pmatrix}\begin{pmatrix}\beta & \gamma \cr -\overline{\gamma} & \overline{\beta} \end{pmatrix}\right)\ll \left\{\begin{matrix} y^{1+t}\quad \mathrm{as\ }y\rightarrow 0\mathrm{\ for\ all\ }t\in(0,1),\cr y^{-k}\quad \mathrm{as\ }y\rightarrow \infty\mathrm{\ for\ all\ }k\geq 1.\end{matrix}\right.
\end{equation*}
\end{itemize}
Of course, the implied constants depend on the function $\eta$ and in the complex case, on the chosen numbers $t,k$. Now in the definition (\ref{poincareseries1}) of $P_1$, we may focus on the component $P_1^{\frak{a}}$. Rewrite it as
\begin{equation*}
P_1^{\frak{a}}(g)= \sum_{\gamma\in \lfact{Z_{\Gamma}\Gamma_N(\frak{a},\frak{c})}{\Gamma(\frak{a},\frak{c})}} f_1(\gamma g)= \sum_{ \gamma'\in \lfact{\Gamma_{\infty}(\frak{a},\frak{c})}{\Gamma(\frak{a},\frak{c})}} \sum_{ \gamma\in \lfact{Z_{\Gamma}\Gamma_{N}(\frak{a},\frak{c})}{\Gamma_{\infty}(\frak{a},\frak{c})}} f_1(\gamma\gamma' g)= \sum_{ \gamma'\in \lfact{\Gamma_{\infty}(\frak{a},\frak{c})}{\Gamma(\frak{a},\frak{c})}} h(\gamma'g),
\end{equation*}
where
\begin{equation*}
h(g)=\sum_{ \gamma\in \lfact{Z_{\Gamma}\Gamma_{N}(\frak{a},\frak{c})}{\Gamma_{\infty}(\frak{a},\frak{c})}} f_1(\gamma g),
\end{equation*} and $\Gamma_{\infty}$ stands for the upper triangular subgroup of $\Gamma$. Now observe the coset space $\lfact{Z_{\Gamma}\Gamma_{N}(\frak{a},\frak{c})}{ \Gamma_{\infty}(\frak{a},\frak{c})}$ is covered by the set
\begin{equation*}
\left\{\begin{pmatrix}(\pm)^r\epsilon & \ \cr \ & \epsilon^{-1}\end{pmatrix}: \epsilon\in\frak{o}^{\times}\right\},
\end{equation*}
where by $(\pm)^r$, we mean that at each real place there can be some sign. This $(\pm)^r$ results only a finite, $2^r$ term summation and apart from this, our summation is over the units. Hence Lemma \ref{sumoverunits} applies and gives
\begin{equation*}
h(g)\ll \left\{ \begin{matrix}|y|^{a-\epsilon}\ (|y|\rightarrow 0), \cr |y|^{-b+\epsilon}\ (|y|\rightarrow \infty),\end{matrix}\right.
\end{equation*}
where $y$ is the height of $g$ (the diagonal factor in the Iwasawa decomposition: the quotient of the upper-left and the lower-right entry), $a$ is the product of $1/2+\sigma$'s at the real places and $(1+t)/2$'s at the complex places and $b$ is the product of $\sigma-1/2$'s and $k/2$'s. We can easily guarantee $a>1$ and $b>0$, and for a small $\epsilon>0$, $a-\epsilon>1$.
Now
\begin{equation*}
\sum_{ \gamma'\in \lfact{\Gamma_{\infty}(\frak{a},\frak{c})}{\Gamma(\frak{a},\frak{c})}} h(\gamma'g)\ll \sum_{ \gamma'\in \lfact{\Gamma_{\infty}(\frak{a},\frak{c})}{\Gamma(\frak{a},\frak{c})}} |y(\gamma'g)|^{a-\epsilon},
\end{equation*}
where $y(\gamma' g)$ means the height of $\gamma' g$. On the RHS, we see an Eisenstein series, which is absolutely convergent (as $a-\epsilon>0$). Moreover, on the LHS, the contribution of the upper-triangular element is bounded (as $b>0$), so the resulting function is also bounded, hence square-integrable. (See \cite[Lemma 2.4]{BruggemanMiatello} and \cite[p.648]{BruggemanMiatello2}.)

\subsection{Case 2: $F$ has no real embedding ($r=0$)}

If all the places are complex, we cannot give a similar argument, as the inverse Lebedev transform $\widetilde{\mathcal{L}}_{l,q}^{\alpha}\eta$ does not tend to $0$ fast enough as the height goes to $0$.

In this case we have a little technical simplification, as $\mathrm{PSL}_2(\CC)=\mathrm{PGL}_2(\CC)$, so we can assume that all occuring complex matrices have determinant $1$.

To solve our problem, we follow the argument of Bruggeman and Motohashi \cite[Section 9]{BruggemanMotohashi}. We will also refer to the thesis of Lokvenec-Guleska \cite{Lokvenec}.

Let
\begin{equation*}
B(\eta)=2\pi l\cdot l!\eta(0,1)|\alpha|^2\sqrt{2l+1} {{2l}\choose{l-1}}^{-\frac{1}{2}} {{2l}\choose{l-q}}^{\frac{1}{2}}.
\end{equation*}

Then \cite[(7.14-15)]{BruggemanMotohashi}, \cite[(9.16-17)]{Lokvenec} can be written as
\begin{equation}\label{complexpoincaresplit}
(\widetilde{\mathcal{L}}_{l,q}^{\alpha}\eta)(g)= B(\eta)\M_{\alpha}\varphi_{l,q}(1,0)(g)+O(|y|^{(1+\sigma)/2}),
\end{equation}
where $y$ is the height of $g$. Note that $\sigma>1$, and here we indicated the function $\varphi$ and its weight and spectral data, the latter evaluated at $(1,0)$ and we may assume that $l>0$ (if $l=0$, then this gives $0$, and we have the needed order of magnitude).

Use $\prod\M_{\alpha}$ as the building block (where we dropped $j$ from $\prod_j$), and let (see \cite[(9.1)]{BruggemanMotohashi})
\begin{equation*}
P\prod\M_{\alpha}\varphi_{l,q}(\nu,p)(g)=\sum_{\gamma\in \lfact{Z_{\Gamma}\Gamma_N(\frak{a},\frak{c})}{\Gamma(\frak{a},\frak{c})}} \prod\M_{\alpha}\varphi_{l,q}(\nu,p)(\gamma g).
\end{equation*}
This is absolutely convergent for $\Re\nu>1$ (see \cite[(4.54)]{Lokvenec}, then an Eisenstein series again majorizes our sum). Note that when $\nu$ is a vector in $\CC^s$, by $\Re\nu>1$ we mean $\Re\nu_j>1$ for all $j$. In order to keep notations as simple as it is possible, we will use this abbreviation from now on, not only for $\nu$, but for $p,l,q$ as well.

Now take any building block $f$ which is a pure tensor, and follow \cite[(5.1-6)]{BruggemanMotohashi}. Using the Bruhat decomposition, the Poincar\'e series $Pf$ can be written formally as
\begin{equation*}
\begin{split}
Pf(g)&=\sum_{\epsilon\in\rfact{\frak{o}^{\times}}{\frak{o}^{2\times}}} f\left(\begin{pmatrix} \epsilon & \ \cr \ & \epsilon^{-1} \end{pmatrix}g\right)\\ &+ \sum_{0\neq c\in\frak{ac}} \sum_{d} \sum_{\omega\in(\frak{ad})^{-1}} f\left( \begin{pmatrix} 1 & d'c^{-1} \cr \ & 1\end{pmatrix} \begin{pmatrix} \ & 1 \cr -1 & \ \end{pmatrix} \begin{pmatrix} c & \ \cr \ & c^{-1}\end{pmatrix} \begin{pmatrix} 1 & dc^{-1}+\omega \cr \ & 1\end{pmatrix} g\right),
\end{split}
\end{equation*}
where $d'$ is the element modulo $(\frak{ad})^{-1}c$ such that $dd'\equiv 1$ modulo $(\frak{ad})^{-1}c$ and we sum over those $d$'s for which such a $d'\in\frak{o}$ exists, that is, $d$ generates $\rfact{\frak{o}}{(\frak{ad})^{-1}c}$ as an $\frak{o}$-module. Now applying Poisson summation, we see the $\omega$-sum can be rewritten as
\begin{equation*}
\norm{\frak{ad}}\sum_{\omega\in\frak{ad}}\psi_{\infty}(d\omega/c) \int_{F_{\infty}}\psi_{\infty}(\omega x)^{-1} f\left( \begin{pmatrix} 1 & d'c^{-1} \cr \ & 1\end{pmatrix} \begin{pmatrix} \ & 1 \cr -1 & \ \end{pmatrix} \begin{pmatrix} c & \ \cr \ & c^{-1}\end{pmatrix} \begin{pmatrix} 1 & x \cr \ & 1\end{pmatrix} g\right)dx.
\end{equation*}
Now assume that for some $\omega'\in \frak{a}^{-3}\frak{d}^{-1}$,
\begin{equation*}
f(n(x)g)=\psi_{\infty}(\omega' x)f(g).
\end{equation*}
Therefore we obtain
\begin{equation*}
\begin{split}
Pf(g)&=\sum_{\epsilon\in\rfact{\frak{o}^{\times}}{\frak{o}^{2\times}}} f\left(\begin{pmatrix} \epsilon & \ \cr \ & \epsilon^{-1} \end{pmatrix}g\right)\\ &+ \sum_{\omega\in\frak{ad}} \sum_{0\neq c\in\frak{ac}} KS(\omega,\frak{a}^{-1}\frak{d}^2;\omega',\frak{a}^3;c,\frak{ad}) \J_{\omega}\left(f\left(\begin{pmatrix} 1/c & \ \cr \ & c\end{pmatrix}g\right)\right),
\end{split}
\end{equation*}
noting that by $\J_{\omega}$, we mean the product of the local integrals (we have chosen $f$ to be a pure tensor).

If $\omega$, $\omega'$, $\frak{a}$ are all fixed and only $c$ varies, Weil's bound (see \cite[(13)]{Venkatesh}) gives
\begin{equation}\label{weilsbound}
KS(\omega,\frak{a}^{-1}\frak{d}^2;\omega',\frak{a}^3;c,\frak{ad})\ll |c|^{\frac{1}{2}+\epsilon}.
\end{equation}
Now specializing the above to $\prod\M_{\alpha}$, $\alpha$ taking the place of $\omega'$, we see (still for $\Re\nu>1$)
\begin{equation}\label{mpoincare}
\begin{split}
P\prod\M_{\alpha}\varphi_{l,q}(\nu,p)(g)&= \sum_{\epsilon\in\rfact{\frak{o}^{\times}}{\frak{o}^{2\times}}} \prod\M_{\alpha}\varphi_{l,q}(\nu,p)\left(\begin{pmatrix} \epsilon & \ \cr \ & \epsilon^{-1} \end{pmatrix}g\right)\\ +&\sum_{0\neq c\in\frak{ac}} KS(0,\frak{a}^{-1}\frak{d}^2;\alpha,\frak{a}^3;c,\frak{ad}) \J_{0}\left(\prod\M_{\alpha}\varphi_{l,q}(\nu,p)\left(\begin{pmatrix} 1/c & \ \cr \ & c\end{pmatrix}g\right)\right)\\ +&\sum_{0\neq\omega\in\frak{ad}}\sum_{0\neq c\in\frak{ac}} KS(\omega,\frak{a}^{-1}\frak{d}^2;\alpha,\frak{a}^3;c,\frak{ad}) \J_{\omega}\left(\prod\M_{\alpha}\varphi_{l,q}(\nu,p)\left(\begin{pmatrix} 1/c & \ \cr \ & c\end{pmatrix}g\right)\right).
\end{split}
\end{equation}
Here, the first term analytically continues to $\nu\in\CC^s$. In the second term, we apply Lemma \ref{jacquettransformofgoodmanwallach} together with (\ref{weilsbound}), this continues to $\Re\nu>0$. In the last term, apply Lemma \ref{jacquettransformofgoodmanwallach} again, together with the explicit form of $\J_{\omega}$, this gives
\begin{equation*}
\sum_{0\neq\omega\in\frak{ad}}  \J_{\omega}\varphi_{l,q}(\nu,p)(g) \sum_{0\neq c\in\frak{ac}} \frac{1}{|c|} \left(\frac{c}{|c|}\right)^p KS(\omega,\frak{a}^{-1}\frak{d}^2;\alpha,\frak{a}^3;c,\frak{ad}) \mathcal{J}_{\nu,p}^*\left(\frac{4\pi}{c}\sqrt{\alpha\omega}\right).
\end{equation*}
Here, the inner sum continues analytically to $\Re\nu>1/2$ by (\ref{weilsbound}). The resulting function is of polynomial order in $\omega$, so the $\omega$-sum gives an analytic function (as the $K$-Bessel function appearing in $\J$ has exponential decay at infinity).

We need one more complement: it may happen that $\alpha\notin\frak{a}^{-3}\frak{d}^{-1}$, but this can be easily handled by taking larger (but fixed) fractional ideals.

Now $P\prod\M_{\alpha}$ is a well-defined Poincar\'e series, however, it fails to be square-integrable. Fix some $0<A\in\RR$. Let $\rho$ be a funcion on $\RR_+^s$ such that it is smooth, $\rho(y)=1$ if $|y|\leq A$, and $\rho(y)=0$ if $|y|\geq A+1$. This extends to $\SLtwo{2}{\CC}$ via the Iwasawa decomposition $na(y)k$ by making it independent of $n$ and $k$.

Let $\prod\M'_{\alpha}(g)=\rho(g)\prod\M_{\alpha}(g)$, then we have, for $\Re\nu>1/2$,
\begin{equation*}
P\prod\M'_{\alpha}(g)=P\prod\M_{\alpha}(g)+(\rho(y)-1) \sum_{\epsilon\in\rfact{\frak{o}^{\times}}{\frak{o}^{2\times}}} \prod\M_{\alpha}\varphi_{l,q}(\nu,p)\left(\begin{pmatrix} \epsilon & \ \cr \ & \epsilon^{-1} \end{pmatrix}g\right),
\end{equation*}
if $A$ is large enough, $y=y(g)$ stands for the height of $g=na(y)k$. Now observe that as $|y|\rightarrow\infty$, the magnitude of $P\prod\M'_{\alpha}$ is determined by the second line of (\ref{mpoincare}) and it is $\ll |y|^{1-\Re\nu}$, so it is bounded according to Lemma \ref{jacquettransformofgoodmanwallach} at $\nu=1$, $p=0$.

Now returning to (\ref{complexpoincaresplit}), define $E\eta$ via
\begin{equation*}
(\widetilde{\mathcal{L}}_{l,q}^{\alpha}\eta)(g)= B(\eta)\M'_{\alpha}\varphi_{l,q}(1,0)(g)+E\eta(g).
\end{equation*}
On the product this gives
\begin{equation*}
\prod(\widetilde{\mathcal{L}}_{l,q}^{\alpha}\eta)(g)= \sum_{S\subseteq\{1,\ldots,s\}}\prod_{\{1,\ldots,s\}\setminus S} B(\eta)\M'_{\alpha}\varphi_{l,q}(1,0)(g)\prod_SE\eta(g),
\end{equation*}
where $\prod_S$ means $\prod_{j\in S}$. Now define
\begin{equation}\label{poincareseries2}
P\prod(\widetilde{\mathcal{L}}_{l,q}^{\alpha}\eta)(g)= P\prod\M'_{\alpha}\varphi_{l,q}(1,0)(g)+P\sum_{\emptyset\neq S\subseteq\{1,\ldots,s\}}\left(\prod_{\{1,\ldots,s\}\setminus S} B(\eta)\M'_{\alpha}\varphi_{l,q}(1,0)\prod_SE\eta\right)(g).
\end{equation}
By our construction, $E\eta(g)\ll |y|^{1+\epsilon}$ for some $\epsilon>0$ as $|y|\rightarrow 0$, and $E\eta(g)\ll |y|^{-k}$ for all $k\in\NN$ as $|y|\rightarrow\infty$. Hence the second Poincar\'e series on the RHS is absolutely convergent and gives a bounded function (the argument of the previous section goes through). This, together with the boundedness of $P\prod\M'$ at $\nu=1,p=0$, gives that defining our Poincar\'e series this way, it is bounded, hence square-integrable.

Altogether, in case of $r=0$, we use (\ref{poincareseries2}) as the definition of the Poincar\'e series, where the first term $P\prod\M'\varphi_{l,q}(1,0)$ is understood via analytic continuation as explained above:
\begin{equation}\label{poincareseries2'}
P\prod\M'\varphi_{l,q}(1,0)= \lim_{\nu\rightarrow 1+}P\prod\M'\varphi_{l,q}(\nu,0),
\end{equation}
where inside the limit, there is an absolutely convergent Poincar\'e series for all $\Re\nu>1$. Finally, we also record what we obtain from (\ref{poincareseries2}), (\ref{poincareseries2'}):
\begin{equation}\label{poincareseries3}
P\prod(\widetilde{\mathcal{L}}_{l,q}^{\alpha}\eta)(g)= \lim_{\nu\rightarrow 1+}P\prod\M'_{\alpha}\varphi_{l,q}(\nu,0)(g)+P\sum_{\emptyset\neq S\subseteq\{1,\ldots,s\}}\left(\prod_{\{1,\ldots,s\}\setminus S} B(\eta)\M'_{\alpha}\varphi_{l,q}(1,0)\prod_SE\eta\right)(g).
\end{equation}

\section*{Acknowledgement} I am grateful to Gergely Harcos, my supervisor, for the enlightening discussions over this topic.


\begin{thebibliography}{1000}

\bibitem{BlomerHarcos} Blomer, V., Harcos, G.: \emph{Twisted $L$-functions over number fields and Hilbert's eleventh problem}, Geometric and Functional Analysis \textbf{20} (2010), 1-52.
\bibitem{BruggemanMiatello2} Bruggeman, R. W., Miatello, R. J.: \emph{Sum formula for $\mathrm{SL}(2)$ over a number field and Selberg type estimate for exceptional eigenvalues}, Geometric and Functional Analysis \textbf{8} (1998), 627-655.
\bibitem{BruggemanMiatello} Bruggeman, R. W., Miatello, R. J.: \emph{Sum formula for $\mathrm{SL}_2$ over a totally real number field}, Memoirs AMS 919, January 2009
\bibitem{BruggemanMotohashi} Bruggeman, R. W., Motohashi, Y.: \emph{Sum formula for Kloosterman sums and fourth moment of the Dedekind zeta-function over the Gaussian number field}, Functiones et Approximatio, XXXI (2003), 23-92.
\bibitem{BruggemanMotohashi13} Bruggeman, R. W., Motohashi, Y.: \emph{A note on the mean value of the zeta and $L$-functions. XIII}, Proc. Japan Acad. Ser. A Math. Sci. \textbf{78}, no. 6 (2002), 87-91.
\bibitem{Bump} Bump, D.: \emph{Automorphic Forms and Representations}, Cambridge Studies in Advanced Mathematics 55, Cambridge University Press, Cambridge, 1997.
\bibitem{GelbartJacquet} Gelbart, S., Jacquet, H.: \emph{Forms on $\GL(2)$ from the analytic point of view}, in ''Automorphic Forms, Representations and 
$L$-funtcions'' (Borel, Casselman eds.), Part 1, Proc. Sympos. Pure Math. \textbf{33} (1979), 213-251.
\bibitem{GR} Gradshteyn, I. S. , Ryzhik, I. M.: \emph{Table of Integrals, Series and Products}, 5th Edition, Academic Press, New York, 1994.
\bibitem{Harcospriv} Harcos, G.: \emph{private communication}
\bibitem{Kuznetsov} Kuznetsov, N. V.: \emph{The Petersson conjecture for cusp forms of weight zero and the Linnik conjecture. Sums of Kloosterman sums.} Mat. Sb. (N.S.) \textbf{111} (1980) 334-383., (Russian)
\bibitem{Lokvenec} Lokvenec-Guleska, H.: \emph{Sum formula for $\mathrm{SL}_2$ over imaginary quadratic number fields}, PhD Thesis, Utrecht University, 2004., http://igitur-archive.library.uu.nl/dissertations/2004-1203-105121/
\bibitem{Venkatesh} Venkatesh, A.: \emph{Beyond endoscopy and special forms on $\GL(2)$}, Journal f\"ur die reine und angewandte Mathematik \textbf{577} (2004), 23-80.
\bibitem{Weil} Weil, A.: \emph{Basic Number Theory}, 3rd Edition, Springer-Verlag, Berlin, Heidelberg, New York, 1974.
\bibitem{Wu} Wu, H.: \emph{Effective subconvex bounds for $\GLone{2}\times\GLone{1}$}, arXiv:1209.5950


 




\end{thebibliography}
\end{document}